\documentclass{article}
\usepackage{amsmath,amsfonts,amsthm,amssymb,amscd,color,xcolor,mathrsfs,eufrak, mathtools,mathrsfs,enumitem, upgreek}
\usepackage{microtype}
\usepackage{hyperref}

 \usepackage{comment}

\colorlet{darkblue}{blue!50!black} 
\colorlet{darkmagenta}{magenta!80!black}

\hypersetup{ 
    colorlinks,%
    citecolor=darkblue,% 
    filecolor=red,% 
    linkcolor=darkblue,%
    urlcolor=blue,%
    pdfnewwindow=true,%
    pdfstartview={FitH}
}

\binoppenalty=9999 \relpenalty=9999

\newcommand{\p}{\partial}
\newcommand{\e}{\varepsilon}

 \DeclareMathAlphabet\mathbfcal{OMS}{cmsy}{b}{n}

\newcommand{\R}{{\mathbb R}}
\newcommand{\Z}{{\mathbb Z}}

\newcommand{\T}{{\mathbb T}}

\newcommand{\La}{\Lambda}

\newcommand{\ty}{\infty}

\newcommand{\de}{\delta}

\newcommand{\aA}{{\cal A}}

\newcommand{\CC}{{\cal C}}

\newcommand{\FF}{{\cal F}}
\newcommand{\GG}{{\cal G}}
\newcommand{\HH}{{\cal H}}
\newcommand{\II}{{\cal I}}

\newcommand{\RR}{{\cal R}}

\newcommand{\TT}{{\cal T}}

\newcommand{\XX}{{\cal X}}

\newcommand{\lag}{\langle}
\newcommand{\rag}{\rangle}

\newcommand{\dd}{{ \,\textup d}}

\newcommand{\lspan}{\mathop{\rm span}\nolimits}

\theoremstyle{plain}
\newtheorem*{mtheorem}{Main Theorem}

\newtheorem{theorem}{Theorem}[section]
\newtheorem{lemma}[theorem]{Lemma}
\newtheorem{proposition}[theorem]{Proposition}

\theoremstyle{definition}
\newtheorem{definition}[theorem]{Definition}

\theoremstyle{remark}

\numberwithin{equation}{section}

\makeatletter
\let\@fnsymbol\@arabic 
\makeatother

\begin{document} 
\author{Vahagn~Nersesyan\footnote{Universit\'e Paris-Saclay, UVSQ, CNRS, Laboratoire de Math\'ematiques de Versailles, 78000, Versailles, France  \& Centre de Recherches Math\'emati\-ques,~CNRS~UMI 3457, Universit\'e de Montr\'eal, Montr\'eal,  QC, H3C 3J7, Canada;    e-mail: \href{mailto:Vahagn.Nersesyan@math.uvsq.fr}{Vahagn.Nersesyan@math.uvsq.fr}}}
 
\title{Approximate controllability of nonlinear parabolic PDEs in arbitrary space dimension}
\date{\today}
\maketitle

\smallskip

\begin{abstract}

In this paper, we consider a   parabolic PDE   on a torus of arbitrary  dimension.~The nonlinear term   is a   smooth function  of polynomial growth of any       degree.~In this general setting,  the      Cauchy problem is not necessarily well posed.~We show that the equation in question      is approximately controllable by only  a finite number of 
     Fourier modes.~This result is proved by using some ideas~from the   
        geometric control    theory introduced by Agrachev and Sarychev.

\medskip
\noindent
{\bf AMS subject classifications:}   35K55, 93B05

\medskip
\noindent
{\bf Keywords:} Parabolic PDEs,  approximate controllability, geometric control    theory     
\end{abstract}

 %\newpage
\tableofcontents

\setcounter{section}{-1}

\section{Introduction}
\label{s0} 

We consider the following parabolic PDE on  the $d$-dimensional torus $\T^d$:
\begin{equation} \label{0.1}
	\p_t u-\nu\Delta u+f(u)=h(t,x)+\eta(t,x),   \quad (t,x)\in (0,T)\times \T^d, \,d\ge1,
\end{equation}
where   $\nu$  is a positive number,     $h:[0,T]\times \T^d\to \R$ is a given  smooth function,   and $f:\R\to \R$ is a   nonlinear term. The latter is assumed to be  of the form
\begin{equation}\label{0.2}
  f(y)= P_p(y)+g(y), 
\end{equation} where $P_p$ is a polynomial of degree   $p\ge2$ and    $g:\R\to \R$ is a bounded smooth function with bounded derivatives.     Let us emphasise  that the   parameters~$d, p,$ and~$T$  are arbitrary, so that Eq.~\eqref{0.1} supplemented with the initial~condition
\begin{equation} \label{0.4}
	u(0)=u_0
\end{equation}is not necessarily    well~posed on the time interval $J_T:=[0,T]$.~For example,  see Section~17 in the book~\cite{QS-2007} and the references therein  for   constructions of finite time blow-up solutions for   problem~\eqref{0.1},~\eqref{0.4}.   We shall take initial condition in the Sobolev space $H^s(\T^d)$ with $s>d/2$, in order to have   locally well-posed Cauchy problem, i.e., local existence, uniqueness, and continuous dependence on the initial condition and  the source term (see   Proposition~\ref{P:1.1}).

The function~$\eta$   plays the role of the  control and   is assumed to be {\it degenerate} in the Fourier space. More precisely, $\eta$   takes values in a  finite-dimensional  space   defined~by 
\begin{equation}\label{0.5}
 \HH(\II)=\lspan\left\{\sin\lag x,k\rag, \cos\lag x,k\rag: k\in \II\right\},
\end{equation}
 where   $\II\subset \Z^d$  is a    finite   symmetric set (i.e., $\II=-\II$)        containing the origin and~$\lag \cdot,\cdot \rag$ is the   Euclidean scalar product in $\R^d$.~Recall that $\II$ is called a    generator  if~any element of  $\Z^d$ is a   linear combination of   elements of $\II$ with integer coefficients.

 To formulate the main result of this paper, let us fix any   $s> d/2$,~$T>0$, and $h\in L^2(J_T,H^{s-1}(\T^d))$. 
  We shall say that Eq.~\eqref{0.1} is   {\it approximately~controllable}~by $\HH(\II)$-valued~control if  for any   initial point $u_0\in H^s(\T^d)$, any target $u_1\in H^s(\T^d)$,   and any   number  $\e>0$, there is a control~$\eta\in L^2(J_T, \HH(\II))$ and a unique  solution~$u$  of problem \eqref{0.1}, \eqref{0.4} defined on the interval $J_T$     such~that
 $$
 \|u(T)-u_1\|_{H^s}<\e.
 $$   
  \begin{mtheorem}
   Assume that one of the following two conditions hold:
a)~$g=0$; or
b)	$p>s     > d/2$ and $g$ is arbitrary function as above. 
If~$\II$ is a generator, then Eq.~\eqref{0.1} is approximately controllable by~$\HH(\II)$-valued control.	
 \end{mtheorem}   The interpretation of the  part  b)   is that a sufficiently strong polynomial component is needed in the nonlinearity in order to brake the influence of the perturbation~$g$. 
 See Section~\ref{S:2} for   more general results.~In particular, in the case when~$f$   is a polynomial (i.e., $g=0$) and $h=0$, the condition that the set $\II$ is a generator is also necessary for   approximate  controllability    (see Theorem~\ref{T:2.5}).   Note that 
the condition on~$\II$       is completely independent of the choice of the functions    $f$ and $h$ and the     parameters~$\nu, s, p,$ and~$T$.

  The proof of the Main Theorem uses some  arguments from the works of   Agrachev and Sarychev~\cite{AS-2005, AS-2006, AS-2008}, who studied   the approximate controllability of the 2D Navier--Stokes (NS)  and Euler systems   by   finite-dimensional forces. Their approach has been   extended to different equations by many authors.    Shirikyan~\cite{shirikyan-cmp2006, shirikyan-aihp2007} established     the approximate  controllability of the   3D NS     system  on the   torus. He also   considered the Burgers equation    on the real line in~\cite{Shi-2013}   and      on a bounded  interval with   Dirichlet boundary conditions   in~\cite{Shir-2018}.   Rodrigues~\cite{SSRodrig-06} proved  approximate controllability    of the 2D NS system  on a  rectangle with   Lions boundary conditions, and with  Phan~\cite{RD-2018} they generalised that result to the 3D case.  
In the papers~\cite{Hayk-2010,   Hayk-2011}, Nersisyan considered 3D Euler system for  
incompressible  and compressible  fluids, and   Sarychev   \cite{Sar-2012} considered the  2D defocusing cubic Schr\"odinger equation.   The controllability of the Lagrangian trajectories of the 3D NS system is considered in~\cite{Ners-2015}   by the author.    

We   use a   technique of applying large controls on small time intervals inspired by the works of Jurdjevic and  Kupka (see the paper~\cite{JK-1985} and Chapter~5 in the book~\cite{MR1425878}), who  considered       
finite-dimensional control systems.~Infinite-dimensional   generalisations of this approach appear in the above-mentioned papers of  Agrachev and Sarychev (e.g., see Section~6.2 in~\cite{AS-2006}) and in the paper~\cite{GHM-2018}   of   Glatt-Holtz, Herzog, and  Mattingly. In the latter, the authors prove, in particular, 
      approximate controllability of a 1D parabolic PDE    with polynomial nonlinearity of odd degree.

   Without going   into  the technical details, let us describe    some ideas of the proof of the Main Theorem.  Together with Eq.~\eqref{0.1},
     we consider an   equation of the~form 
\begin{equation} \label{0.6}
  	\p_t u-\nu\Delta (u+\zeta)+f(u+\zeta)=h(t,x)+\eta(t,x)
\end{equation}
with   $\zeta$ and $\eta$  taking values in $\HH(\II)$. It   turns out   that Eq.~\eqref{0.1} is approximately controllable with control $\eta$ if and only if so is Eq.~\eqref{0.6} with two controls~$\zeta$ and~$\eta$.
   The   solution $u$  of problem~\eqref{0.6},~\eqref{0.4},  whenever   exists, is denoted by 
  \begin{equation}\label{solu(t)}
  	  \RR_t(u_0,\zeta,h+\eta):=u(t).
  \end{equation}
The first step is the   following asymptotic property that holds for any smooth   functions~$\zeta$  and $\eta$ not depending on time:
\begin{equation}\label{0.7}
 	 \RR_{\de}(u_0,\de^{-1/p}\zeta,h+\de^{-1}\eta)\to  u_0+\eta-c\zeta^p \quad\text{in $H^s(\T^d)$ as $\de\to 0^+$}, 
\end{equation}
where $c$ is the leading coefficient of the polynomial $P_p$ in \eqref{0.2}.
   This allows to steer the trajectory of~\eqref{0.6},~\eqref{0.4} in small time    close to   any target $u_1$ belonging to the affine space~$u_0+\HH_1(\II)$, where $\HH_1(\II)$ is the   largest 
vector space   whose elements can be   written in the form
\begin{equation}\label{Reqn}
\eta-\sum_{m=1}^n\zeta_m^p
\end{equation}
for  some integer   $n\ge1$ and   vectors $\eta,\zeta_1, \ldots,\zeta_n\in \HH(\II)$ (see Section~\ref{S:2} for the precise definition of $\HH_1(\II)$). Then iterating this argument,   we show that starting from $u_0$ we can also attain approximately   any point in $u_0+\HH_2(\II)$, where~the space $\HH_2(\II)$  is defined     by \eqref{Reqn}, but now with vectors 
  $\eta,\zeta_1, \ldots,\zeta_n\in \HH_1(\II)$. In~this way,  we construct  a non-decreasing sequence of subspaces $\{\HH_j(\II)\}$ such that the points in $u_0+\HH_j(\II)$ are attainable from $u_0$. From the fact that $\II$ is a generator we deduce that  the union  $\cup_{j=1} \HH_j(\II)$ is dense in~$H^s(\T^d)$ (i.e., $\HH(\II)$ is  {\it saturating} in the language of the geometric control theory). This allows to control approximately Eq.~\eqref{0.6} to any point in $H^s(\T^d)$ in small time. The   controllability in any time $T$ is derived by
steering the system close to the target~$u_1$ in small time, then by    keeping the trajectory   close to $u_1$   for a sufficiently long period of time, by   applying an appropriate control. 
 
    The main contribution of this paper  is the generality of the assumptions on the nonlinear term. We give a simple   condition on the set  of the   Fourier modes~$\II$ that ensures the controllability of the system.~Surprisingly,  the condition is independent of the nonlinear term and is also    necessary in the polynomial case. The perturbation term $g$ in~\eqref{0.2} brings   new   difficulties that do not appear in the previously considered situations. Indeed,     the nonlinear  term~$f$ now      may     vanish on   some  ball, so it will not be able to  couple there the   Fourier modes in $\II$ to the others. This situation seems to be out of the reach with previous~methods.  The main ingredient in the above scheme of the proof  is the limit \eqref{0.7}. According to that limit,    when applying     large controls~$\delta^{-1/p}\zeta$ on small  time intervals $[0,\delta]$,  we only see the highest order term~$cy^p$ of the nonlinearity~$f(y)$ when we pass  to the limit~$\delta\to 0^+$. Thus the contribution of  the perturbation   $g$ vanishes in the limit, provided that the condition b)~is~satisfied.  
  
  The proof of approximate controllability we give   is   short and conceptually simple; it is quite    general  and can be adapted to more degenerate problems. In~\cite{BGN20},  this approach is further developed to consider the controllability of the system of 3D primitive equations of geophysical fluid dynamics  with control acting directly only on the temperature equation.

         %{\color{blue}The   meaning of the  scheme of the proof described above is that by applying large controls on small time intervals, we   steer the system from~$u_0$ to a zone of the phase space  where  the polynomial part of the nonlinearity dominates and couples sufficiently well {\it all} the Fourier modes. In that zone, we approximately   control the system   to everywhere, including to positions that have access to any neighborhood of the  target $u_1$, wherever the latter is in the phase~space.} 

       %that the nonlinear term $f$ can be   a   polynomial~$P$ of    {\it any}   degree $p\ge2$ perturbed by an {\it arbitrary} smooth function $g$ (see~\eqref{0.2} and~\eqref{0.3}).   For example, one can take $g$ to be an arbitrary smooth function with bounded derivatives of any order (e.g., compactly supported smooth function).

  Finally, let us mention that this paper is partially motivated  by applications to the ergodicity of  randomly forced PDEs. Indeed, the control theory is known to be a useful  tool in the study of stochastic systems   with   highly degenerate noise.   We refer the reader to the paper~\cite{KNS-2019} for more details  and references   on this subject     and for a concrete application of our Main Theorem   in the study of a stochastic version of  Eq.~\eqref{0.1}, i.e., when  the control~$\eta$ is replaced by a random process.

  This paper is organised as follows. In Section~\ref{S:1}, we establish  a perturbative result on the  existence and stability of solutions of  problem~\eqref{0.1},~\eqref{0.4}. The proof of the Main Theorem is given in  Section~\ref{S:2}.~In Section~\ref{S:3}, we prove limit~\eqref{0.7}, and in
    Section~\ref{S:4}, we construct   examples of saturating spaces.

    \subsubsection*{Acknowledgement}
    
The author thanks the referees for their valuable comments and suggestions.     The author thanks Armen Shirikyan for helpful    discussions.~This research was supported by the {\it Agence Nationale de la Recherche} through  the grant NONSTOPS   ANR-17-CE40-0006-02 and by the CNRS through  the PICS grant {\it Fluctuation theorems in stochastic systems}.

 \subsection*{Notation} 
 
 In this paper, we use the following notation.

\smallskip
\noindent
$\Z^d$ is   the integer lattice in~$\R^d$.

\smallskip
\noindent
 $\T^d$ is the standard $d$-dimensional torus~$\R^d/2\pi\Z^d$.

\smallskip
\noindent
$H^s:=H^s(\T^d)$ is the Sobolev space of order $s$ endowed with the usual norm~$\|\cdot\|_s$.

\medskip

Let  $X$ be a Banach  space endowed with the
norm $\|\cdot\|_X$ and let $J_T:=[0, T ]$.

\smallskip
\noindent
$B_X(a,r)$ is  the closed ball of radius $r>0$ centred at $a\in X$. We write $B_X(r)$, when $a=0$.

\smallskip
\noindent
$L^q(J_T,X),$  $1\leq q<\infty$   is      the
space of measurable functions $u: J_T \rightarrow X$ endowed with the norm
\begin{equation}
\|u\|_{L^q(J_T,X)}:=\bigg(\int_{0}^T \|u(t)\|_X^q\dd t
\bigg)^{1/q}<\infty.\nonumber
\end{equation}

\noindent
$L^q_{loc}(\R_+,X)$ is the space of measurable functions $u: \R_+ \rightarrow X$  whose restriction to the interval $J_T$ belongs to  $L^q(J_T,X)$ for any  $T>0$.

\smallskip
\noindent
$C(J_T,X)$ is the space of continuous functions $u:J_T\to X$ with the norm 
$$
\|u\|_{C(J_T,X)}:=\max_{t\in J_T} \|u(t)\|_X.
$$

\noindent
  $x\wedge y$  denotes the     minimum of real numbers $x$ and $y$.
%   $x\vee y$ and $x\wedge y$  denote the maximum and   minimum of real numbers $x$ and $y$.
 
 \smallskip
 \noindent
 $C,\, C_1,\, \ldots$   denote some  unessential positive constants.

\section{Local well-posedness and  stability}\label{S:1}

  In this section, we study  the   local  existence and stability  of solutions for the following  generalisation of Eq.~\eqref{0.1}:      
\begin{equation}
	\p_t u-\nu\Delta (u+\zeta)+f(u+\zeta)=\varphi,
\label{1.1}
\end{equation}   where $f$ is of the form \eqref{0.2} with any polynomial $P_p$ of degree $p\ge2$ and any bounded smooth function $g:\R\to \R $         with bounded derivatives.   In this paper, by smooth we always mean $C^\ty$-smooth. 
 For any~$T>0$ and any integer~$s>d/2$, we define the space 
$$
\XX_{T,s}:= C(J_T,H^{s})\cap L^2(J_T,H^{s+1})
$$ and endow it with the norm 
$$
\|u\|_{\XX_{T,s}}:=\|u\|_{C(J_T,H^{s})}+ \|u\|_{ L^2(J_T,H^{s+1})}.
$$  
  \begin{proposition}\label{P:1.1}
Let     $\hat u_0\in H^{s}$,   $\hat \zeta \in C
(\R_+,H^{s+1})$, and $\hat \varphi\in L^2_{loc}(\R_+,H^{s-1})$. There is a maximal time     $T_*:=T_*(\hat u_0, \hat \zeta, \hat \varphi)>0$ and a unique solution $\hat u$ of problem~\eqref{1.1},~\eqref{0.4} with $(u_0, \zeta, \varphi)=(\hat u_0, \hat \zeta, \hat \varphi)$     whose restriction to $J_T$  belongs  to~$\XX_{T,s}$ for any $T<T_*$. If~$T_*<\ty$, then   
\begin{equation}\label{1.2}
\|\hat u(t)\|_s\to +\infty \quad \text{as $t\to T_*^-$}.
\end{equation} 
Furthermore, for any $T<T_*$,  there are positive 
  constants
 $\delta=\delta(T,\Lambda)$  and $C=C(T,\Lambda)$, where   
 \begin{equation}\label{1.3}
\Lambda:= \|\hat \zeta\|_{C(J_T,H^{s+1})}+\|\hat \varphi\|_{L^2(J_T,H^{s-1})}+ \|\hat u\|_{\XX_{T,s}},
\end{equation}
such that the following properties hold.

\begin{enumerate}
\item[(i)] For any $u_0\in H^{s},$ $\zeta\in C(J_T,H^{s+1})$, and $\varphi\in L^2(J_T,H^{s-1})$  satisfying  
\begin{equation}\label{1.4}
\|u_0-\hat u_0\|_{s}+ \|\zeta-\hat\zeta\|_{C(J_T,H^{s+1})} + \|\varphi-\hat \varphi\|_{L^2(J_T,H^{s-1})}<
\delta,
\end{equation}
  problem \eqref{1.1}, \eqref{0.4} has a unique
solution $ u\in \XX_{T,s}.$
\item[(ii)] 
As in \eqref{solu(t)}, let $\RR$ be the resolving operator for \eqref{1.1}, i.e., the mapping  taking a triple $(u_0,\zeta,\varphi )$ satisfying \eqref{1.4} to the solution~$u$.
 Then     
 \begin{align}\label{1.5}
\|\RR(u_0,\zeta,\varphi) -\RR(\hat u_0,\hat\zeta,\hat \varphi)\|_{\XX_{T,s}}&\le 
C \big( \|u_0-\hat u_0\|_s+
\|\zeta-\hat\zeta\|_{C(J_T,H^{s+1})}\nonumber\\&\quad +\|\varphi-\hat \varphi\|_{L^2(J_T,H^{s-1})}\big).
 \end{align}
 \end{enumerate}
\end{proposition}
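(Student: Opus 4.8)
The plan is to treat \eqref{1.1} by the standard fixed-point method for semilinear parabolic equations, keeping careful track of the maximal existence time and of the dependence of all constants on the data, and then to derive the stability estimate \eqref{1.5} from a Gronwall-type argument for the equation satisfied by the difference of two solutions, combined with a continuation (bootstrap) argument that forces the perturbed solution to be defined on all of $J_T$.

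\emph{Step 1 (linear estimates).} Writing $S(t)=e^{\nu t\Delta}$ for the heat semigroup on $\T^d$, a function $u\in\XX_{T,s}$ solves \eqref{1.1},~\eqref{0.4} iff
$$
u(t)=S(t)u_0+\int_0^t S(t-\tau)\bigl(\nu\Delta\zeta(\tau)-f(u(\tau)+\zeta(\tau))+\varphi(\tau)\bigr)\dd\tau .
$$
I need two facts about $S$. First, $\|S(\cdot)u_0\|_{\XX_{T,s}}\le C_\nu\|u_0\|_s$: the $C(J_T,H^s)$ bound is immediate, and the $L^2(J_T,H^{s+1})$ bound follows from $\int_0^\infty |k|^2e^{-2\nu t|k|^2}\dd t=(2\nu)^{-1}$ in Fourier series. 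Second, the maximal-regularity estimate
$$
\Bigl\|\int_0^\cdot S(\cdot-\tau)\psi(\tau)\dd\tau\Bigr\|_{\XX_{T,s}}\le C(\nu,T)\,\|\psi\|_{L^2(J_T,H^{s-1})},
$$
which I would prove by an energy estimate: the left-hand side $v$ solves $\p_t v-\nu\Delta v=\psi$, $v(0)=0$, and pairing with $(I-\Delta)^s v$ gives $\tfrac{d}{dt}\|v\|_s^2+\nu\|v\|_{s+1}^2\le 2\nu\|v\|_s^2+\nu^{-1}\|\psi\|_{s-1}^2$, whence the claim by Gronwall with $C(\nu,T)$ nondecreasing in $T$. (The naive pointwise smoothing bound $\|S(t-\tau)\psi(\tau)\|_s\lesssim(t-\tau)^{-1/2}\|\psi(\tau)\|_{s-1}$ is not integrable in $\tau$ against an $L^2$ function, so one really must use maximal regularity here.)

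\emph{Step 2 (nonlinear estimates, local existence, blow-up alternative).} Since $s>d/2$ and $s\in\Z$, $H^s$ is a Banach algebra, and Moser-type composition estimates give, for $\|v_i\|_s\le R$: $\|P_p(v)\|_s\le C(1+\|v\|_s^p)$ and $\|P_p(v_1)-P_p(v_2)\|_{s-1}\le C(R)\|v_1-v_2\|_s$; and, writing $g(v_1)-g(v_2)=(v_1-v_2)\int_0^1 g'(v_2+\theta(v_1-v_2))\dd\theta$ and using that $g$ and all its derivatives are bounded together with $\|ab\|_{s-1}\le C\|a\|_s\|b\|_{s-1}$, $\|g(v)\|_s\le C(1+\|v\|_s)$ and $\|g(v_1)-g(v_2)\|_{s-1}\le C(R)\|v_1-v_2\|_s$. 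Combining with Step 1, the map $\Phi$ given by the right-hand side of the integral equation maps the ball of radius $2C_\nu(\|u_0\|_s+\nu T^{1/2}\|\zeta\|_{C(J_T,H^{s+1})}+\|\varphi\|_{L^2(J_T,H^{s-1})})$ in $\XX_{T,s}$ into itself and is a $\tfrac12$-contraction there, once $T$ is small in terms of $\|u_0\|_s$, $\|\zeta\|_{C(J_T,H^{s+1})}$, $\|\varphi\|_{L^2(J_T,H^{s-1})}$ (the smallness comes from the factor $T^{1/2}$ produced in bounding $\|f(u_1+\zeta)-f(u_2+\zeta)\|_{L^2(J_T,H^{s-1})}$ by $T^{1/2}$ times a $C(J_T,H^s)$ norm). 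This yields a unique solution in that ball; a standard connectedness argument upgrades uniqueness to all of $\XX_{T,s}$, and restarting at times approaching the supremum of existence produces the maximal time $T_*$ with a solution on every $J_T$, $T<T_*$. If $T_*<\infty$ and $\|\hat u(t_n)\|_s$ were bounded along some $t_n\to T_*^-$, the local existence time from data of that size is bounded below, so restarting from $t_n$ sufficiently close to $T_*$ extends the solution beyond $T_*$, a contradiction; hence \eqref{1.2}.

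\emph{Step 3 (stability).} Fix $T<T_*$ and $\Lambda$ as in \eqref{1.3}. For data $(u_0,\zeta,\varphi)$ set $w=u-\hat u$ on the common existence interval; then $w(0)=u_0-\hat u_0$ and
$$
\p_t w-\nu\Delta w=\nu\Delta(\zeta-\hat\zeta)-\bigl(f(u+\zeta)-f(\hat u+\hat\zeta)\bigr)+(\varphi-\hat\varphi).
$$
Splitting $f(u+\zeta)-f(\hat u+\hat\zeta)=a\,w+b\,(\zeta-\hat\zeta)$ with $a=\int_0^1 f'(\hat u+\zeta+\theta w)\dd\theta$ and $b=\int_0^1 f'(\hat u+\hat\zeta+\theta(\zeta-\hat\zeta))\dd\theta$, the composition estimates bound $\|a\|_{C(J_T,H^s)}$, $\|b\|_{C(J_T,H^s)}$ by a function of $\Lambda$ and $\|u\|_{\XX_{T,s}}$; then Step 1 plus Gronwall give, as long as $\|u\|_{\XX_{t,s}}\le 2\Lambda$,
$$
\|w\|_{\XX_{t,s}}\le C(\Lambda,T)\bigl(\|u_0-\hat u_0\|_s+\|\zeta-\hat\zeta\|_{C(J_T,H^{s+1})}+\|\varphi-\hat\varphi\|_{L^2(J_T,H^{s-1})}\bigr).
$$
Now bootstrap: let $t^\star\le T$ be the supremum of times in the existence interval with $\|u-\hat u\|_{\XX_{t,s}}\le\Lambda$; on $[0,t^\star]$ one has $\|u\|_{\XX_{t,s}}\le 2\Lambda$, so the displayed bound controls $\|u-\hat u\|_{\XX_{t^\star,s}}$ by $C(\Lambda,T)\delta$, and choosing $\delta=\delta(T,\Lambda)$ with $C(\Lambda,T)\delta\le\Lambda/2$ forces $t^\star$ to be the right end of the existence interval, which by \eqref{1.2} (since $\|u\|_{\XX_{t,s}}$ stays bounded) exceeds $T$. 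This gives $u\in\XX_{T,s}$, i.e.~(i), and the displayed inequality at $t=T$ with $C=C(\Lambda,T)$ is \eqref{1.5}, i.e.~(ii).

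\emph{Main obstacle.} Because the problem is only locally well posed, everything must be organized around $T_*$, and the stability part cannot presume the perturbed solution lives on $J_T$ — this has to be extracted from the a priori bound through the continuation argument of Step~3. The one genuinely delicate point is matching the maximal-regularity estimate, which loses one spatial derivative, with the composition estimates for $f$ measured in $H^{s-1}$; in particular the bounded perturbation $g$ must be handled by exploiting that \emph{all} derivatives of $g$ are bounded, so that $\|g(v_1)-g(v_2)\|_{s-1}$ is controlled by $\|v_1-v_2\|_s$ with a constant depending only on $\|v_i\|_{C(J_T,H^s)}$, uniformly in time.
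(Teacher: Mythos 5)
Your proof is correct and follows essentially the same route as the paper: Duhamel representation combined with a maximal-regularity estimate in $\XX_{T,s}$, algebra/Moser composition bounds for $P_p$ and $g$ (exploiting that all derivatives of $g$ are bounded), a continuation argument that keeps the perturbed solution in a fixed ball so that the blow-up alternative yields existence on all of $J_T$, and Gronwall. The only cosmetic difference is that you linearize the difference $f(u+\zeta)-f(\hat u+\hat\zeta)$ in mean-value form and close with a linear Gronwall inside the bootstrap, whereas the paper keeps a power $\|w\|_s^{m}$, $m=p\wedge(s+1)$, and integrates a nonlinear differential inequality for $\Phi(t)=\delta+\int_0^t\|w(\tau)\|_s^{2m}\dd\tau$; both give the same smallness condition on $\delta(T,\Lambda)$.
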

\begin{proof}  
  Local existence of a solution $\hat u$  and   \eqref{1.2} are proved  using a fixed point approach based on estimates that we will use   in the proof of (i) and~(ii) below. As the argument is quite standard, we skip the details of the proof of that  part.  

{\it Step~1.~Proof of   (i).}
Let $T<T_*(\hat u_0, \hat \zeta, \hat \varphi)$,  and  let $( u_0,  \zeta,  \varphi)$ be as in (i).
  We extend $\zeta$ and $\varphi$ by zero outside the interval $J_T$ and denote by $u$ the corresponding solution. The latter exists up to some maximal time $T_*( u_0,  \zeta,  \varphi)>0$ by the first part of the proposition.      Let us show that $T<T_*( u_0,  \zeta,   \varphi)$, provided that~$\delta>0$ in~\eqref{1.4} is sufficiently small. Indeed, the function $w=u-\hat u$   is  a solution of   the  problem
\begin{align}
&\p_t w -\nu \Delta (w+\xi) +f(w+\xi+\hat u+\hat\zeta)-f(\hat u+\hat\zeta) =\psi,\label{1.6}\\
&w(0,x)=w_0(x) \label{1.7}
\end{align}
with  $w_0=u_0-\hat u_0$, $\xi=\zeta-\hat \zeta$, and $\psi=\varphi-\hat \varphi$.
For any $\TT>0$ and $F\in L^2(J_\TT,H^{s-1})$, we have that 
$$
\Psi   := e^{\nu t\Delta}w_0+\int_0^te^{\nu (t-\tau)\Delta}  F  \dd \tau  
$$belongs to $\XX_{\TT,s} $ and 
\begin{equation}\label{E:Psi}
\|\Psi \|_{\XX_{\TT,s}}\le C_1 \left( \|w_0\|_s+ \|F \|_{L^2(J_\TT,H^{s-1})}   \right),
\end{equation}  where $C_1$ does not depend on $\TT$. Let us denote
$$
\TT=\sup\{t<  T_*(\hat u_0, \hat \zeta, \hat \varphi) \wedge T_*( u_0,  \zeta,  \varphi): \|w(t)\|_s<1\}.
$$We will show that $\TT>T$, provided that $\delta$ in \eqref{1.4} is sufficiently small. To this end, we apply \eqref{E:Psi} with
$$
F :=\nu\Delta\xi -f(w+\xi+\hat u+\hat\zeta)+f(\hat u+\hat\zeta)+ \psi,
$$ and note that   $w=\Psi$ for this choice of $F$. Then 
\begin{align}\label{1.8}
\|w \|_{\XX_{\TT,s}}&\le C_1 \big( \|w_0\|_s+ \nu \|\xi \|_{L^2(J_\TT,H^{s+1})}+ \|\psi \|_{L^2(J_\TT,H^{s-1})}\nonumber  \\&\quad + \|  f(w+\xi+\hat u+\hat\zeta)-f(\hat u+\hat\zeta) \|_{L^2(J_\TT,H^{s-1})}   \big).
\end{align}
To  estimate  the   term with    $f$,  we start with the polynomial part: 
\begin{align}\label{1.9}
 \|P_p(w+\xi+\hat u&+\hat\zeta)-P_p(\hat u+\hat\zeta)\|_s \nonumber\\&\le C_2\, \|w+\xi\|_s \Big(\|w\|_s+\|\xi\|_s+\|\hat u\|_s+\|\hat\zeta\|_s+1\Big)^{p-1},
\end{align} where we   used the fact that $H^s$ is an algebra for~$s>d/2$ and    
\begin{equation}\label{1.10}
\|ab\|_s\le C_3\, \|a\|_s \|b\|_s, \quad a,b\in H^s.
\end{equation}Then we write 
$$
g(w+\xi+\hat u+\hat\zeta)-g(\hat u+\hat\zeta)=(w+\xi) \int_0^1g'(\tau(w+\xi)+\hat u+\hat\zeta)\dd \tau,  
$$and apply inequality    \eqref{1.10}:
\begin{equation}\label{E:gna}
\|g(w+\xi+\hat u+\hat\zeta)-g(\hat u+\hat\zeta)\|_s\le 	\| w+\xi\|_s \int_0^1 \|g'(\tau(w+\xi)+\hat u+\hat\zeta)\|_s\dd \tau.
\end{equation}
Now we use the   inequality
\begin{equation}\label{EEgna}
\|g'(a)\|_s\le C_4 \, \|a\|_s^s, \quad a\in H^s,
\end{equation}which is obtained by applying the Sobolev inclusion $H^s\subset L^\ty $ and the Gagliardo--Nirenberg inequality
$$
\|D^k a\|_{L^{2s/k}}\le C_5\, \|D^s a\|^{k/s}_{L^2}\|   a\|_{\ty}^{1-k/s}
$$
to the   terms that arise in estimating the $L^2$ norms of the derivatives
of $g'(u)$ (see Section~2 in \cite{A76}), where $D^k a$ is the derivative of order $k$ of the function $a$. Combining \eqref{E:gna} and \eqref{EEgna}, we get
$$
\|g(w+\xi+\hat u+\hat\zeta)-g(\hat u+\hat\zeta)\|_s\le C_6 \,	\| w+\xi\|_s  \left( \|w\|_s+\|\xi\|_s+\|\hat u\|_s+\|\hat\zeta\|_s+1 \right)^s.
$$
This inequality, together with \eqref{1.4}, \eqref{1.9}, and the Young inequality, implies that 
\begin{equation}\label{RRRE}
\|f(w+\xi+\hat u+\hat\zeta)-f(\hat u+\hat\zeta)\|_s\le C_7  \left(\delta +	\|w\|^m_s \right), \quad t <  \TT,
\end{equation}where  $m= p\wedge (s+1)$ and  $C_7=C_7(\La)$. Going back to \eqref{1.8}, we obtain
\begin{equation}\label{TTR}
\|w(t)\|_{\XX_{\TT,s}}^2\le C_8 \left(\delta + \int_0^t \|w(\tau)\|_s^{2m}\dd \tau  \right), \quad t< \TT\wedge T.
\end{equation} Let us denote
$$
\Phi(t):=  \delta + \int_0^t \|w(\tau)\|_s^{2m}\dd \tau . 
$$Inequality \eqref{TTR} implies that 
$$
(\dot \Phi)^{1/m}\le C_8\,\Phi, 
$$ which is equivalent to
 $$
\frac{\dot \Phi}{\Phi^{m}}\le C_8^{m}.
$$Integrating the latter, we get
$$
\Phi(t)\le \begin{cases} \delta \exp(C^m_8 t), & \text{if } m=1, \\ \delta (1-(m-1) C_8^m\delta^{m-1} t)^{-1/(m-1)}, & \text{if }m\ge2, \end{cases} \quad \quad t< \TT\wedge T.
$$Choosing $\delta$ sufficiently small, we see that 
\begin{equation}\label{EElam}
\Phi(t)< C_9 \delta<1 , \quad   t< \TT\wedge T,
\end{equation}which implies that $\TT>T$ and  proves (i).

\medskip
{\it Step~2.~Proof of   (ii).} Using inequalities~\eqref{1.8}, \eqref{RRRE}, \eqref{TTR}, and \eqref{EElam}, we~get 
\begin{align*} 
\|w\|_{\XX_{t,s}}^2&	\le C_{10}\left(\|w_0\|_s^2+ \|\xi\|_{L^2(J_T,H^{s+1})}^2+\|\psi\|^2_{L^2(J_T,H^{s-1})} +\int_0^t \|w\|_{\XX_{\tau,s}}^2 \!\dd\tau\right)
\end{align*}for any $t\in J_T$.
Applying the Gronwall inequality,  we obtain~\eqref{1.5} and   complete  the proof of the proposition. 
    \end{proof}

\section{Main result}\label{S:2}

Let us  take any $s>d/2$, $T>0$, 
  $h\in L^2  (J_T,H^{s-1})$, and $u_0\in
H^s$, and consider problem~\eqref{0.1},~\eqref{0.4}. The function  $\eta$ will be the  control   taking  values in a finite-dimensional subspace $\HH$ of $H^{s+2}$ that will be specified below.   
 Let us set 
 $$
   \Theta(u_0,h,T):=\{\eta\in
L^2(J_T,H^{s-1}): \text{s.t.~\eqref{0.1},~\eqref{0.4} has
a solution in~$\XX_{T,s}$}\},
 $$ and recall that 
 $\RR(\cdot,\cdot,\cdot)$  is the  resolving operator 
 for~\eqref{1.1},~\eqref{0.4} and  $\RR_t(\cdot,\cdot,\cdot)$ is  its 
   evaluation    at time $t$. 
   \begin{definition}\label{D:2.1}  We shall say that 
  Eq.~\eqref{0.1} is     approximately controllable
by   $\HH$-valued control  if  for any $\e>0$   and   any      $u_0,u_1\in H^s$     there is  a control $\eta
\in \Theta(u_0,h,T)\cap L^2(J_T,\HH) $ such that
$$
\|\RR_T(u_0,0,h+\eta) - u_1 \|_s<\e.
$$   
\end{definition} 

  To simplify the presentation, we   assume that the   leading coefficient $c$ of the polynomial $P_p$  in \eqref{0.2} equals~to one. Indeed, the general case can be reduced to this one by a time scaling~$\tau=c^{-1}t$  and noting that the below results do not change if we multiply~$\nu, h$ and~$g$ by a constant.

 Recall that  $p\ge2$ is the integer in~\eqref{0.2}.~For any finite-dimensional subspace~$\HH\subset  H^{s+2}$,   let $\CC(\HH)$ be
   the (nonconvex) cone defined by 
   $$
   \left\{ \eta - \sum_{m=1}^{n}\zeta_m^p :  \ n \geq 1, \ \eta , \zeta_1, \ldots , \zeta_n  \in {\mathcal H}      \right\}.
   $$We denote by~$\FF(\HH)$   the largest 
vector space contained in  $H^{s+2}\cap \overline {\CC(\HH),}^{H^s}$ where~$\overline {\CC(\HH)}^{H^s}$ is the closure of~$\CC(\HH)$ in~$H^s$.  It is easy to check that~$\FF(\HH)$ is well defined and     finite-dimensional. Iterating this, we construct a
         non-decreasing sequence of finite-dimensional subspaces:    
\begin{equation}\label{2.1}
\HH_0=\HH,\quad \HH_j=\FF(\HH_{j-1}),  \quad j \geq
1, 
\end{equation} and denote $ \HH_\infty=\bigcup_{j=1}^\infty \HH_j.$
\begin{definition}\label{D:2.2}
 We   say that $\HH$ is  saturating  if $\HH_\ty$  is dense in $ H^s$.
\end{definition}
  Our definition of  saturating subspace   is close  to the  definitions  used in   the papers~\cite{AS-2006, shirikyan-cmp2006}. The difference is that here the subspaces~$\HH_j$  are defined in terms of the  approximations of the elements of the form~\eqref{eeeZ}  and not by the elements themselves. This point is   important for the examples of saturating subspaces   given
  in   Section~\ref{S:4}.

 The following is a more general version of the  Main Theorem stated in the Introduction. 
\begin{theorem}\label{T.2.3}   Assume that one of the following two conditions hold:
a)~$g=0$; or
b)	$p>s    > d/2 $ and $g:\R\to\R$ is an arbitrary  bounded smooth function with bounded derivatives. 
If $\HH 
$  is saturating, then Eq.~\eqref{0.1}  is approximately controllable    by     $\HH$-valued control.
\end{theorem}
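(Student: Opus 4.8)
The plan is to implement the Agrachev--Sarychev convexification scheme with the device of large controls on short time intervals, exactly along the lines sketched in the Introduction. The first reduction is to replace Eq.~\eqref{0.1} (one control $\eta$) by the extended equation \eqref{0.6} (two controls $\zeta,\eta\in\HH$): I would show that approximate controllability of \eqref{0.1} by $\HH$-valued control is equivalent to approximate controllability of \eqref{0.6}, since in \eqref{0.6} one may always absorb a smooth, slowly varying $\zeta$ by a change of unknown $v=u+\zeta$, at the cost of adding $\nu\Delta\zeta-\p_t\zeta$ and modifying the source, which stays in a controllable class because $\HH\subset H^{s+2}$. Next, I would make precise the key short-time asymptotics: for time-independent $\zeta,\eta\in\HH$,
\begin{equation*}
\RR_\de\bigl(u_0,\de^{-1/p}\zeta,h+\de^{-1}\eta\bigr)\longrightarrow u_0+\eta-\zeta^p\quad\text{in }H^s\text{ as }\de\to0^+,
\end{equation*}
citing the later Section~\ref{S:3} where \eqref{0.7} is proved (recall $c=1$ by the normalisation). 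Here condition b) enters only to guarantee that the $g$-contribution, which scales like $\de\cdot\de^{-s/p}$ after the Gagliardo--Nirenberg bound \eqref{EEgna}, vanishes in the limit when $p>s$; under a) it is absent.

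With \eqref{0.7} in hand the combinatorial core is an induction on the subspaces $\HH_j$ in \eqref{2.1}. The induction hypothesis is: for every $u_0\in H^s$, every $v\in u_0+\HH_j$, every $\e>0$ and every small $\tau>0$, there is a control steering \eqref{0.6} from $u_0$ to within $\e$ of $v$ in time $\le\tau$. For $j=0$ this is immediate from \eqref{0.7}: given $\eta-\sum_{m=1}^n\zeta_m^p\in\CC(\HH)$, concatenate $n$ short pulses of the above type (each pulse adds one term $\eta/n-\zeta_m^p$ up to an error that \eqref{0.7} and the continuity estimate \eqref{1.5} of Proposition~\ref{P:1.1} make arbitrarily small), and then pass to $H^s$-limits to reach any point of $u_0+\overline{\CC(\HH)}^{H^s}$, in particular of $u_0+\FF(\HH)$. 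The induction step $j\to j+1$ is the same argument performed \emph{along a trajectory}: to realise $\eta-\sum\zeta_m^p$ with $\eta,\zeta_m\in\HH_j=\HH_{(j+1)-1}$, one first uses the hypothesis to bring the state near $u_0+\eta$ using only $\HH$-valued controls, then near $u_0+\eta-\zeta_1^p$, and so on; at each stage the error is controlled by \eqref{1.5}, and the total time is taken $\le\tau$ by making each of the finitely many sub-steps fast. Since $\HH$ is saturating, $\HH_\infty$ is dense in $H^s$, so for any target $u_1$ and any $\e$ one first picks $j$ and $w\in u_0+\HH_j$ with $\|w-u_1\|_s<\e/2$, then steers to within $\e/2$ of $w$. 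This gives small-time approximate controllability of \eqref{0.6}, hence of \eqref{0.1}.

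Finally, to upgrade from ``some small time'' to the prescribed time $T$, I would steer to within $\e/2$ of $u_1$ in a small time $\tau_0$, then on $[\tau_0,T]$ keep the trajectory $\e/2$-close to $u_1$ by choosing $\eta$ so that \eqref{0.1} runs near the exact solution with frozen right-hand side; the stability estimate \eqref{1.5} of Proposition~\ref{P:1.1}, applied on $[\tau_0,T]$ with base point a solution that stays near $u_1$ (obtained by a suitable $h+\eta$ that makes $-\nu\Delta u_1+f(u_1)$ the source, so the constant $u_1$ is an exact solution), guarantees the nearby solution remains within $\e/2$, giving $\|\RR_T(u_0,0,h+\eta)-u_1\|_s<\e$. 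The main obstacle is the induction step for $\HH_{j+1}$: one must show that approximating elements of $\overline{\CC(\HH_j)}^{H^s}$ \emph{along a trajectory} (rather than as an abstract vector-space identity) is legitimate, which requires uniform-in-$\de$ control of the solutions produced by the short pulses and careful bookkeeping of the accumulated errors through the finitely many concatenations — this is precisely where Proposition~\ref{P:1.1}, and in case b) the scaling restriction $p>s$ inside \eqref{0.7}, are used in an essential way.
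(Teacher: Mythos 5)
Your overall architecture coincides with the paper's: the short-time limit \eqref{0.7} (Proposition~\ref{P:2.4}), an induction over the spaces $\HH_j$, the saturation hypothesis to pass to a dense set of targets, and a final ``steer close, then stay close'' argument to reach the prescribed time $T$. However, the heart of the proof --- the induction step $j\to j+1$ --- is left unjustified in your plan, and the missing ingredient is not the ``bookkeeping of accumulated errors'' you flag as the main obstacle. To pass from ``near $u_0+\eta$'' to ``near $u_0+\eta-\zeta_1^p$'' with $\zeta_1\in\HH_j$, the natural move is the pulse $\de^{-1/p}\zeta_1$ in the $\zeta$-slot of \eqref{0.6}; but for $j\ge1$ this pulse is $\HH_j$-valued, not $\HH$-valued, so it is \emph{not an admissible control}, and your reduction of \eqref{0.1} to \eqref{0.6} (which only licenses $\HH$-valued $\zeta$) does not cover it. The paper resolves this with the exact translation identity
$\RR_{t}(u_0+\delta^{-1/p}\zeta_1,0,h)=\RR_{t}(u_0,\delta^{-1/p}\zeta_1,h)+\delta^{-1/p}\zeta_1$
(valid because $\zeta_1$ is time-independent): the inadmissible pulse is traded for an initial-condition displacement by the large vector $\de^{-1/p}\zeta_1\in\HH_j$, which \emph{is} approximately reachable by the induction hypothesis; one then evolves freely for time $\de$ to pick up $-\zeta_1^p+\de^{-1/p}\zeta_1$ via Proposition~\ref{P:2.4}, and finally removes the residual $\de^{-1/p}\zeta_1$ by a second application of the induction hypothesis. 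Without this shift--evolve--shift-back mechanism (or an equivalent substitute), your assertion ``then near $u_0+\eta-\zeta_1^p$'' assumes exactly what has to be proved.

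Two smaller points. In your time-$T$ step, making $u_1$ an exact stationary solution would require the source $-\nu\Delta u_1+f(u_1)-h$, which is not $\HH$-valued; the paper instead simply runs the equation with \emph{zero} control and uses Proposition~\ref{P:1.1} to guarantee the trajectory stays $\e$-close to $u_1$ on a uniform interval $[0,\tau]$, iterating finitely many times. Also, Proposition~\ref{P:2.4} requires $u_0\in H^{s+1}$, so for a general initial condition $u_0\in H^s$ you need a preliminary regularising step (zero control on a short interval), which your plan omits.
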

We derive this theorem   from the following proposition proved in Section~\ref{S:3}. 
Let us~denote 
\begin{align*}
   \hat{\Theta}(u_0,h,T):=\{&  (\eta, \zeta)\in L^2(J_T,H^{s-1})\times
C(J_T,H^{s+1}): \text{s.t.~\eqref{1.1},~\eqref{0.4}}\\&   \text{has
a solution in~$\XX_{T,s}$ with    $\varphi=h+\eta$}\}.
\end{align*}
 \begin{proposition}\label{P:2.4} Under the conditions of Theorem~\ref{T.2.3},
 for any  $u_0,\eta\in H^{s+1}$, $\zeta \in  H^{s+2}$, and   $h\in L^2  (J_T,H^{s-1})$, there is  $\de_0>0$ such that $(\de^{-1/p}\eta,\de^{-1}\zeta)\in \hat{\Theta}(u_0,h,\de)$ for any $\de\in (0,\de_0)$, and     the following  limit  holds   for the corresponding solution at time $t=\delta$: 
$$
 	 \RR_{\de}(u_0,\de^{-1/p}\zeta,h+\de^{-1}\eta)\to  u_0+\eta-\zeta^p \quad\text{in $H^s$ as $\de\to 0^+$}.
$$ 
\end{proposition}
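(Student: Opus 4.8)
The plan is to argue directly from the mild (Duhamel) form of problem~\eqref{1.1},~\eqref{0.4} with $\zeta$ replaced by $\zeta_\delta:=\delta^{-1/p}\zeta$ and $\varphi=h+\delta^{-1}\eta$, namely
\begin{equation}\label{plan-duhamel}
u(t)=e^{\nu t\Delta}u_0+\int_0^t e^{\nu(t-\tau)\Delta}\bigl[\nu\Delta\zeta_\delta-P_p(u+\zeta_\delta)-g(u+\zeta_\delta)+h+\delta^{-1}\eta\bigr]\dd\tau ,
\end{equation}
and to extract from $P_p(u+\zeta_\delta)$ the only term that survives as $\delta\to0^+$. Since the leading coefficient of $P_p$ is $1$, the binomial expansion gives $P_p(u+\zeta_\delta)=\delta^{-1}\zeta^p+Q_\delta(u,\zeta)$, where $Q_\delta$ is a polynomial in $(u,\zeta)$ all of whose coefficients are $O(\delta^{-(p-1)/p})$. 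Hence the only non-vanishing part of the forcing is the time-independent vector $\delta^{-1}(\eta-\zeta^p)$, and the proposition reduces to proving, as $\delta\to0^+$: (i) $e^{\nu\delta\Delta}u_0\to u_0$ and $\int_0^\delta e^{\nu(\delta-\tau)\Delta}\delta^{-1}(\eta-\zeta^p)\dd\tau\to\eta-\zeta^p$ in $H^s$; and (ii) $\int_0^\delta e^{\nu(\delta-\tau)\Delta}r_\delta(\tau)\dd\tau\to0$ in $H^s$, where $r_\delta:=\nu\Delta\zeta_\delta-Q_\delta(u,\zeta)-g(u+\zeta_\delta)+h$. (Equivalently, one may first rescale time by $\tau=t/\delta$; this changes nothing essential.)

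A preliminary step is to show that the solution exists on all of $J_\delta$ for $\delta$ small — so that $(\delta^{-1}\eta,\delta^{-1/p}\zeta)\in\hat\Theta(u_0,h,\delta)$ — and that $\|u\|_{C(J_\delta,H^s)}\le M$ with $M$ independent of $\delta$. The point is that, since $e^{\nu t\Delta}$ is a contraction on $H^s$ and $t\le\delta$, the ``large'' term in \eqref{plan-duhamel} is in fact bounded: $\bigl\|\int_0^t e^{\nu(t-\tau)\Delta}\delta^{-1}(\eta-\zeta^p)\dd\tau\bigr\|_s\le(t/\delta)\,\|\eta-\zeta^p\|_s\le\|\eta-\zeta^p\|_s$. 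Estimating the remaining terms of \eqref{plan-duhamel} as in (ii) then yields $\|u(t)\|_s\le\|u_0\|_s+\|\eta-\zeta^p\|_s+R(\delta)$ on the largest subinterval of $[0,\delta\wedge T_*]$ on which $\|u(\cdot)\|_s\le M:=\|u_0\|_s+\|\eta-\zeta^p\|_s+1$, where $R(\delta)$ gathers the remainder bounds below and, for this fixed $M$, satisfies $R(\delta)\to0$ as $\delta\to0^+$. A continuity argument in the spirit of Step~1 of the proof of Proposition~\ref{P:1.1}, together with the blow-up alternative~\eqref{1.2}, then forces $T_*>\delta$ and $\|u\|_{C(J_\delta,H^s)}\le M$.

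With the uniform bound in hand, (i) follows from the elementary estimate $\|(e^{\nu t\Delta}-I)w\|_s\le C(\nu t)^{1/2}\|w\|_{s+1}$ applied to $w=u_0$ and $w=\eta-\zeta^p$; this is exactly where the hypotheses $u_0,\eta\in H^{s+1}$ and $\zeta\in H^{s+2}$ (so that $\zeta^p\in H^{s+1}$, by the algebra property of $H^{s+1}$) are used. For (ii), one bounds each piece of $r_\delta$ in $L^1(J_\delta,H^s)$, again using that $e^{\nu t\Delta}$ is an $H^s$-contraction: the term $\nu\Delta\zeta_\delta$ contributes $O(\delta^{1-1/p})$ (here $\zeta\in H^{s+2}$ gives $\Delta\zeta\in H^s$); the polynomial remainder $Q_\delta(u,\zeta)$ contributes $O(\delta^{1/p})$ by the algebra property of $H^s$ and the bound $\|u\|_{C(J_\delta,H^s)}\le M$; and for the $h$-piece, which only lies in $H^{s-1}$, I would invoke the maximal-regularity estimate~\eqref{E:Psi} with $\TT=\delta$, giving $\bigl\|\int_0^\delta e^{\nu(\delta-\tau)\Delta}h\,\dd\tau\bigr\|_s\le C_1\|h\|_{L^2(J_\delta,H^{s-1})}\to0$. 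The delicate, genuinely new term is $g(u+\zeta_\delta)$: using a Moser--Gagliardo--Nirenberg composition estimate of the type behind~\eqref{EEgna}, in the form $\|g(a)\|_s\le C_g(1+\|a\|_s)^s$ for $g\in C^\infty$ with bounded derivatives, one gets $\|g(u+\zeta_\delta)\|_s\le C_g\bigl(1+M+\delta^{-1/p}\|\zeta\|_s\bigr)^s=O(\delta^{-s/p})$, which after integration over $J_\delta$ contributes $O(\delta^{\,1-s/p})$. This tends to $0$ precisely when $p>s$ (condition b)), and is absent when $g=0$ (condition a)). I expect this to be the main obstacle: squeezing the sharp exponent $-s/p$ out of the composition estimate — rather than the lossy $-(s+1)/p$ one would get by differentiating $g(a)=g(0)+a\int_0^1 g'(ta)\,\dd t$ and applying~\eqref{EEgna} — and thereby matching the hypothesis $p>s$ exactly. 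Assembling (i) and (ii) in~\eqref{plan-duhamel} at $t=\delta$ gives $\RR_\delta(u_0,\delta^{-1/p}\zeta,h+\delta^{-1}\eta)=u(\delta)\to u_0+\eta-\zeta^p$ in $H^s$.
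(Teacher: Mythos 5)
Your argument is correct, and it reaches the conclusion by a genuinely different technical route from the paper's. The paper rescales time by $t\mapsto \delta t$, subtracts the affine path $w(t)=u_0+t(\eta-\zeta^p)$, and runs an $H^s$ energy estimate on $v=u(\delta t)-w(t)$ by pairing the resulting equation with $(-\Delta)^s v+v$; existence on $J_\delta$ and the convergence are then closed simultaneously by a nonlinear Gronwall inequality of the form $(\dot\Phi)^{2/(p+1)}\le (C\delta^{1/p})^{2/(p+1)}\Phi$, which in addition yields the explicit rate $\|u(\delta)-(u_0+\eta-\zeta^p)\|_s=O(\delta^{1/p})$ and the uniformity over the ball \eqref{SSSD}. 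You instead stay with the mild formulation: the $H^s$-contractivity of $e^{\nu t\Delta}$ tames the singular forcing $\delta^{-1}(\eta-\zeta^p)$ (its Duhamel integral is bounded by $\|\eta-\zeta^p\|_s$ and converges to $\eta-\zeta^p$ at rate $\delta^{1/2}$ thanks to $\eta-\zeta^p\in H^{s+1}$), existence on $J_\delta$ comes from a continuity/bootstrap argument combined with the blow-up alternative \eqref{1.2}, and the $h$-term is handled by the maximal-regularity bound \eqref{E:Psi} rather than by Young's inequality against the dissipation. The two decisive structural points are identical in both proofs: the binomial isolation of $\delta^{-1}\zeta^p$ inside $P_p(u+\delta^{-1/p}\zeta)$ (compare \eqref{3.9}), and the Moser-type composition bound with exponent $s$, which makes the $g$-contribution $O(\delta^{1-s/p})$ and is exactly where the hypothesis $p>s$ enters (compare \eqref{3.9SSS}); your worry about losing an extra power in that estimate is well placed, and the paper uses precisely the sharp form you propose. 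Your version is somewhat more elementary---no fractional-order energy pairings---at the modest cost of a less explicit convergence rate; both arguments deliver uniformity in the data if constants are tracked.
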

The proof of  this proposition will be given in Section~\ref{S:3}.  
\begin{proof}  [Proof of Theorem \ref{T.2.3}]
  As discussed in the Introduction, the idea   is to establish approximate controllability in small time to the points of the affine space $u_0+\HH_N$ by combining   Proposition~\ref{P:2.4} and an induction argument in $N$.   Then the saturation property  will imply    approximate controllability in small time  to any point of $H^s$. Finally, controllability in any time $T$ is proved by
steering the system close to the target $u_1$ in small time, then forcing  it to remain close to $u_1$  for a sufficiently long   time.    
  The accurate proof is divided into four steps. 

\medskip

 {\it Step~1.~Controllability in small time to  $u_0+\HH_0$.}~Let us assume for the moment that $u_0\in H^{s+1}$.
 First we prove that   problem~\eqref{0.1},~\eqref{0.4} is approximately controllable to the set $u_0+\HH_0$ in small time. More precisely,  we show that, 
for any $\e>0$, $\eta\in\HH_0$, and $T_0>0$, there is a time~$T<T_0$  and a   control $\hat \eta
\in \Theta(u_0,h,T)\cap L^2(J_T,\HH)$ such that 
$$
	\|\RR_T(u_0,0,h+\hat\eta) - u_0-\eta \|_s<\e.
$$
Indeed, applying Proposition~\ref{P:2.4} for the couple $ (\eta,0)$, we see that
$$ 	
 \RR_{\de}(u_0,0,h+\de^{-1}\eta)\to  u_0+\eta \quad\text{in $H^s$ as $\de\to 0^+$},
$$
which gives the required result with $\hat\eta=\delta^{-1}\eta$ and $T=\delta$.

\medskip
{\it Step~2.~Controllability in small time to  $u_0+\HH_N$.}~We argue by induction.~Assume that the approximate controllability of problem~\eqref{0.1},~\eqref{0.4}   to the set~$u_0+\HH_{N-1}$ is already proved. Let $\eta_1\in \HH_N$  be    of the   form
\begin{equation}\label{2.2}
\eta_1=\eta -\sum_{m=1}^n \zeta_m^p	
\end{equation}
  for some   integer $n\ge1$ and vectors~$\eta,\zeta_1, \ldots,\zeta_n\in \HH_{N-1}$.
Applying Proposition~\ref{P:2.4} for the couple $ (0,\zeta_1)$, we see that
\begin{equation}\label{2.3}
 \RR_{\de}(u_0,\de^{-1/p}\zeta_1,h)\to  u_0-\zeta_1^p \quad\text{in $H^s$ as $\de\to 0$}.
\end{equation}
  By the uniqueness of the solution of the Cauchy problem,   the following  equality holds
$$
\RR_{t}(u_0+\delta^{-1/p}\zeta_1,0,h)=\RR_{t}(u_0,\delta^{-1/p}\zeta_1,h)+\delta^{-1/p}\zeta_1, \quad t\in J_\delta.
$$Taking $t=\delta$ in this equality and  using limit \eqref{2.3}, we obtain
$$
\|\RR_{\delta}(u_0+\delta^{-1/p}\zeta_1,0,h)-u_0+\zeta_1^p-\delta^{-1/p}\zeta_1\|_s\to 0\quad \text{as $\delta\to 0.$}	
$$
Combining this with the fact that $\eta,\zeta_1\in \HH_{N-1}$,   the   induction   hypothesis, and Proposition~\ref{P:1.1}, we find a small time $T>0$ and  a   control $ \hat \eta_1
\in \Theta(u_0,h,T)\cap L^2(J_T,\HH)$ such that 
$$
	\|\RR_T(u_0,0,h+\hat\eta_1) - u_0-\eta+\zeta_1^p \|_s<\e.
$$
Iterating this argument successively for the vectors  $\zeta_2, \ldots, \zeta_n$, we construct a small time 
$\hat T >0$ and  a   control $\hat \eta
\in \Theta(u_0,h, \hat T)\cap L^2(J_{\hat T},\HH)$ satisfying 
$$
	\|\RR_{\hat T}(u_0,0,h+\hat\eta) -  u_0-\eta+\zeta_1^p+\ldots+\zeta_n^p \|_s=\|\RR_{\hat T}(u_0,0,h+\hat\eta) -  u_0-\eta_1  \|_s<\e,
$$where we used \eqref{2.2}. 
This proves the approximate controllability in small time to any point in  $u_0+\HH_N$.

\medskip
{\it Step~3. Global   approximate  controllability in small time.} Now 
let $u_1\in H^s$ be arbitrary. As~$\HH_\ty$ is dense in $H^s$, there is an integer $N\ge1$ and point $\hat u_1\in u_0+\HH_N$ such that 
\begin{equation}\label{2.4}
\|u_1-\hat u_1\|_s<\e/2.
\end{equation}
By the results of Steps 1 and 2, for any $\e>0$ and $T_0>0$ there is a time  
   $T <T_0$  and a   control $\hat \eta
\in \Theta(u_0,h,T)\cap L^2(J_T,\HH)$ satisfying 
$$	\|\RR_T(u_0,0,h+\hat\eta) - u_1 \|_s<\e/2.
$$  Combining this with \eqref{2.4}, we get approximate controllability in small time to~$u_1$ from $u_0\in H^{s+1}$.  
Taking control equal to zero on a small time interval and using the  regularising property of the equation, we   conclude  small time approximate controllability   starting from arbitrary  $u_0\in H^s$. By regularising property we mean that the solution becomes smooth at any time $t>0$ when the initial point~$u_0$ is in $ H^s$. This can be seen, for example, by using the Duhamel formula and the regularising property of the   heat semigroup.

\medskip
{\it Step~4. Global   approximate  controllability in fixed time $T$.}
Since we have global controllability in small time, to complete the proof of the theorem, 
 it suffices to show that,  for any $\e,T>0$   and   any      $u_1\in H^s$,    there is  a   control $\eta
\in \Theta(u_1,h,T)\cap L^2(J_T,\HH)  $ such~that
\begin{equation}\label{ATT}
\|\RR_T(u_1,0,h+\eta) - u_1 \|_s<\e.
\end{equation}  Note that here  the initial condition and the target coincide   with $u_1$.   It is  not clear, whether  it is possible or not   to  find a control taking values in $\HH$ such that the solution starting from~$u_1$ remains close to that point on all the time interval~$J_T$. However, the argument below allows to have \eqref{ATT} precisely at time~$T$. 

 By Proposition~\ref{P:1.1}, there are numbers  $r\in (0,\e)$ and $\tau>0$   such that,   for any~$v \in B_{H^s}(u_1,r),$ we have  $(0,0)\in  \hat\Theta(v,h,\tau)$ and
$$
	\|\RR_t(v,0,h)-u_1\|_s<\e \quad \text{for }   \,\, t\in J_\tau.  
$$  Thus starting from any point $v$ in the $r$-neighborhood of $u_1$, we are guaranteed to remain $\e$-close to $u_1$  on the (uniform) time interval $[0,\tau].$ 
If $\tau>T$, then   the proof is complete. Otherwise,  applying the result of Step 3 with initial condition $u_0=\RR_\tau(v,h)$, small time $T'< T-\tau$, and target $u_1$, we find a   control $\hat \eta\in \Theta(u_0,h,T')\cap L^2(J_{T'},\HH)$ such that 
$$	\|\RR_{T'}(u_0,0,h+\hat\eta) - u_1 \|_s<r.
$$   Applying again Proposition~\ref{P:1.1}, we conclude that, if $2\tau+T'>T$, then   the proof is complete.  Otherwise, we apply  again  the small time controllability property to return to the ball $B_{H^s}(u_1,r)$. After a finite number (less than the integer part of   $T/\tau+1$) of iterations,  we   complete the proof of the theorem.
   \end{proof}
   Now let us assume that 
     the nonlinear term $f$ in Eq.~\eqref{0.1} is a polynomial of degree $p\ge2$, i.e., $g=0$.  
     Recall that the space $\HH(\II)$ is  defined by~\eqref{0.5} for  a    finite symmetric set~$\II\subset \Z^d$       containing the origin. Let us denote by~$\tilde \II$ the set   of   all 
      linear combinations of elements of~$\II$ with integer coefficients. By~definition,~$\II$ is a generator if $\tilde \II=\Z^d$. Let $H^s(\II)$ be the closure in $H^s$ of the~set
    $$
    \lspan\{\sin\lag x,k\rag, \cos\lag x,k\rag:  k\in \tilde\II\}.
    $$ 
   \begin{proposition}\label{P:4.1} 
The space    $\HH(\II)$ is saturating  if and only if~$\II$ is a generator.   
\end{proposition}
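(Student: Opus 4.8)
The statement is an equivalence, so I would prove the two implications separately, and both will hinge on understanding the effect of the cubing-type operation $\eta-\sum_m\zeta_m^p$ on Fourier supports. The key algebraic observation is that if $k_1,\dots,k_p\in\tilde\II$ then the product $\prod_{j}(\text{trig in }k_j)$ expands, via product-to-sum formulas, into trig functions with frequencies $k_1\pm k_2\pm\cdots\pm k_p$, all of which lie in $\tilde\II$; conversely, by a standard Vandermonde/polarisation trick one can isolate any single monomial $\sin\lag x,k_1+\cdots+k_p\rag$ (up to lower-order frequency terms already available) as a linear combination of $p$-th powers of elements of $\HH(\II)$. I would set up this ``frequency arithmetic'' lemma first, being careful that $\II$ is symmetric and contains $0$ (so that among the signs one may always reduce the number of effective summands, which lets sums of $p$ modes reach both sums and differences).

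\medskip

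\textbf{($\Leftarrow$) If $\II$ is a generator, then $\HH(\II)$ is saturating.} Set $\HH_0=\HH(\II)$ and recall $\HH_j=\FF(\HH_{j-1})$. By the product-to-sum expansion above, every element of $\CC(\HH_{j-1})$ is a finite trigonometric sum with frequencies in $\tilde\II=\Z^d$; since $\FF(\HH_{j-1})$ is the largest vector space inside $H^{s+2}\cap\overline{\CC(\HH_{j-1})}^{H^s}$, we get $\HH_j\subset H^s(\II)=H^s$ for all $j$ — so there is no obstruction from frequencies escaping $\Z^d$, and the whole question is whether the $\HH_j$ exhaust a dense set. The plan is to show by induction on $|k|_{1}$ (an appropriate measure of how many generators from $\II$ are needed to write $k$) that $\sin\lag x,k\rag,\cos\lag x,k\rag\in\HH_j$ for $j$ large enough depending on $k$. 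The base case $k\in\II$ is Step~1 ($\eta\in\HH_0$). For the inductive step, write $k=k'+k''$ with $k'$ reachable at level $j-1$ and $k''\in\II$; then $\sin\lag x,k\rag$ appears — after the polarisation trick producing $p$-th powers of elements of $\HH_{j-1}$ — as a linear combination of a member of $\CC(\HH_{j-1})$ plus trig functions of frequencies of strictly smaller complexity, which by induction already lie in $\HH_{j-1}$. Hence $\sin\lag x,k\rag\in\overline{\CC(\HH_{j-1})}^{H^s}+\HH_{j-1}$; a short argument (that $\HH_j$ is a vector space and the stray terms are themselves in $\HH_{j-1}\subset\HH_j$) upgrades this to $\sin\lag x,k\rag\in\HH_j$. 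Since $\II$ generates $\Z^d$, every frequency is eventually reached, so $\HH_\infty\supset\lspan\{\sin\lag x,k\rag,\cos\lag x,k\rag:k\in\Z^d\}$, which is dense in $H^s$.

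\medskip

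\textbf{($\Rightarrow$) If $\II$ is not a generator, then $\HH(\II)$ is not saturating.} Here $\tilde\II$ is a proper sublattice (or sub-semigroup generating a proper subgroup) of $\Z^d$. I would show that $H^s(\II)$ is an \emph{invariant} subspace for the saturation construction: it is closed in $H^s$, and it is closed under the nonlinear cone operation because, by the product-to-sum expansion, a product of $p$ functions with frequencies in $\tilde\II$ again has all frequencies in $\tilde\II$ (using that $\tilde\II$ is closed under addition and negation). Therefore $\CC(\HH(\II))\subset H^s(\II)$, hence $\overline{\CC(\HH(\II))}^{H^s}\subset H^s(\II)$, hence $\FF(\HH(\II))\subset H^s(\II)$, and inductively $\HH_j\subset H^s(\II)$ for all $j$, so $\HH_\infty\subset H^s(\II)\subsetneq H^s$. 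Thus $\HH(\II)$ is not saturating.

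\medskip

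\textbf{Main obstacle.} The genuinely non-routine part is the polarisation/Vandermonde step in the $(\Leftarrow)$ direction: extracting a prescribed single frequency $\sin\lag x,k'+k''\rag$ as an exact linear combination of $p$-th powers $\bigl(\sum a_i(\text{trig in }k_i)\bigr)^p$ while controlling that \emph{all} the leftover frequencies produced are of strictly lower complexity and hence already available at the previous stage. One must exploit that $0\in\II$ (to pad a product of fewer than $p$ factors up to a $p$-th power) and that $\II=-\II$ (to realise differences), and one must check the relevant $p\times p$ linear system is nonsingular — this is where a Vandermonde determinant enters. Once this lemma is in hand, both directions are short; I expect to devote most of the write-up to stating and proving this frequency lemma cleanly (it is essentially the content deferred to Section~\ref{S:4}).
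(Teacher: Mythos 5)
Your necessity direction and the overall inductive scheme for sufficiency match the paper's, but there is a genuine gap in the ``polarisation/Vandermonde'' lemma on which your whole forward direction rests: you never confront the sign constraint built into the cone $\CC(\HH)$. Polarisation expresses $p!\,\zeta_1\cdots\zeta_p$ as a \emph{signed} linear combination of $p$-th powers $(\pm\zeta_1\pm\cdots\pm\zeta_p)^p$, whereas the cone only contains elements of the form $\eta-\sum_m\zeta_m^p$, i.e.\ elements of $\HH$ plus linear combinations of $p$-th powers whose coefficients have the form $-\lambda^p$. When $p$ is odd this is harmless, since $-(-\lambda\zeta)^p=\lambda^p\zeta^p$ makes every real coefficient achievable, so the linear span of the $p$-th powers lies inside $\CC(\HH)$ and your derivative/Vandermonde extraction of $\zeta_1\cdots\zeta_p$ goes through --- this is exactly the paper's Lemma~\ref{L:4.2}. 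When $p$ is even, however, $-\lambda^p\le 0$, so the cone contains only $\HH$ minus \emph{nonnegative} combinations of $p$-th powers; the linear span of the products $\zeta_1\cdots\zeta_p$ is then not obviously contained in $\overline{\CC(\HH)}^{H^s}$, and checking that a $p\times p$ linear system is nonsingular proves the wrong statement. Your proposal never distinguishes the parity of $p$, so as written it only covers the odd case.

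The paper's treatment of even $p$ is genuinely different and is the reason $\FF(\HH)$ is defined through the $H^s$-\emph{closure} of the cone rather than the cone itself. One considers the one-parameter family $(\e^\alpha+\e\cos\lag x,m\rag)^p$ with $\alpha=-2/(p-2)$, absorbs the divergent low-order terms (which lie in $\HH(\II)$ because $0\in\II$) into the $\eta$-part of the cone, and lets $\e\to 0^+$ to place $\cos\lag x,2m\rag$ in $\HH_1(\II)$; a second pass with $\e^\alpha+\e(\cos\lag x,l\rag\pm\cos\lag x,m\rag)$ then yields $\cos\lag x,l+m\rag$ only at the level of $\HH_2(\II)$ (the case $p=2$ being handled by a direct computation). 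To repair your argument you must either restrict to odd $p$ or supplement the polarisation lemma with a limiting argument of this type showing that, for even $p$, the required mixed products are reachable through the closed cone despite the sign obstruction.
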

This proposition is established in Section~\ref{S:4}. 
     We have the following more detailed version of Theorem~\ref{T.2.3}. 
\begin{theorem}\label{T:2.5}
Let $h\in L^2  (J_T,H^{s-1}(\II))$ and let the function $f$  be a polynomial of degree~$p\ge2$. Then Eq.~\eqref{0.1} is approximately controllable if and only if  $\II$ is a generator.
\end{theorem}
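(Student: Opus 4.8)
We prove the two implications separately. The \emph{if} direction is immediate: if $\II$ is a generator, then $\HH(\II)$ is saturating by Proposition~\ref{P:4.1}, and since here $g=0$, condition a) of Theorem~\ref{T.2.3} is met, so Eq.~\eqref{0.1} is approximately controllable by $\HH(\II)$-valued control. The substance of the statement is the \emph{only if} direction, which I would establish by contraposition. Assume $\II$ is not a generator, i.e.\ $\tilde\II\subsetneq\Z^d$. The point is that $\tilde\II$ is a symmetric \emph{subgroup} of $\Z^d$ (it is the set of all integer linear combinations of the symmetric set $\II\ni 0$), so the closed subspace $H^s(\II)$ is a closed subalgebra of the Banach algebra $H^s$, it contains the constants (because $0\in\II$, hence the zero mode lies in $\tilde\II$), and it is invariant under the Fourier multipliers $\Delta$ and $e^{\nu t\Delta}$. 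Consequently the polynomial $f=P_p$ maps $H^s(\II)$ into itself. The plan is to show that $H^s(\II)$ is then invariant under the controlled dynamics whenever $h$ and the control take values in $H^{s-1}(\II)$, and then to pick a target outside $H^s(\II)$.

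To prove invariance, let $P$ be the orthogonal $L^2$-projection onto the span of the Fourier modes with frequencies in $\tilde\II$; it is a norm-one Fourier multiplier, restricts to a projection of each $H^\sigma$ onto $H^\sigma(\II)$, and commutes with $e^{\nu t\Delta}$. Let $u_0\in H^s(\II)$ and let $\eta\in L^2(J_T,\HH(\II))\subset L^2(J_T,H^{s-1}(\II))$ be a control for which problem~\eqref{0.1},~\eqref{0.4} has a solution $u\in\XX_{T,s}$, and set $v:=(I-P)u\in C(J_T,H^s)$. Applying $I-P$ to the mild (Duhamel) formulation of~\eqref{0.1} and using $(I-P)u_0=0$, $(I-P)h=0$, $(I-P)\eta=0$ together with $(I-P)e^{\nu t\Delta}=e^{\nu t\Delta}(I-P)$, I obtain
$$
v(t)=-\int_0^t e^{\nu(t-\tau)\Delta}(I-P)f(u(\tau))\,\dd\tau .
$$
Writing $f(u)=f(Pu+v)=f(Pu)+\big(f(Pu+v)-f(Pu)\big)$ and using $(I-P)f(Pu(\tau))=0$, which holds because $Pu(\tau)\in H^s(\II)$ and $f$ preserves $H^s(\II)$, the norm bounds $\|e^{\nu(t-\tau)\Delta}\|_{H^s\to H^s}\le1$ and $\|I-P\|_{H^s\to H^s}\le1$ give
$$
\|v(t)\|_s\le\int_0^t\big\|f(Pu(\tau)+v(\tau))-f(Pu(\tau))\big\|_s\,\dd\tau .
$$
By the algebra estimate~\eqref{1.9} (with $g=0$), the integrand is bounded by $C\,\|v(\tau)\|_s\big(1+\|Pu(\tau)\|_s+\|v(\tau)\|_s\big)^{p-1}\le C'\|v(\tau)\|_s$, the constant depending only on $\|u\|_{C(J_T,H^s)}$. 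Since $t\mapsto\|v(t)\|_s$ is continuous and $v(0)=0$, Gronwall's inequality forces $v\equiv0$ on $J_T$, i.e.\ $u(t)\in H^s(\II)$ for all $t\in J_T$.

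With the invariance established, take $u_0=0\in H^s(\II)$ and $u_1=\sin\lag x,k_0\rag$ for any $k_0\in\Z^d\setminus\tilde\II$; since $\tilde\II$ is a symmetric subgroup, $\pm k_0\notin\tilde\II$, so $u_1\notin H^s(\II)$ and, $H^s(\II)$ being closed, $\rho:=\dist_{H^s}(u_1,H^s(\II))>0$ (it equals $\|u_1\|_s$ for the standard $H^s$ inner product). Then for every $\eta\in L^2(J_T,\HH(\II))$ for which a solution $u\in\XX_{T,s}$ of~\eqref{0.1},~\eqref{0.4} exists we have $u(T)\in H^s(\II)$, hence $\|\RR_T(u_0,0,h+\eta)-u_1\|_s\ge\rho$; choosing $\e<\rho$ in Definition~\ref{D:2.1} shows Eq.~\eqref{0.1} is not approximately controllable. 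I expect the only delicate step to be the invariance argument: projecting the equation directly onto the modes outside $\tilde\II$ is circular because the nonlinearity couples those modes with $\tilde\II$, and the resolution is precisely the cancellation $(I-P)f(Pu)=0$ coming from the subalgebra structure of $H^s(\II)$, turned into $v\equiv0$ by the Gronwall estimate; the rest is routine.
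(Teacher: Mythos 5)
Your proof is correct and follows essentially the same route as the paper: the \emph{if} direction via Proposition~\ref{P:4.1} and Theorem~\ref{T.2.3}, and the \emph{only if} direction by showing the attainable set stays in $H^s(\II)$ because that space is a closed subalgebra invariant under the heat semigroup and under the polynomial nonlinearity. The only difference is that you spell out the invariance step (Duhamel formula, the cancellation $(I-P)f(Pu)=0$, and Gronwall), which the paper asserts in a single sentence; your added detail is sound.
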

    \begin{proof}  By Proposition~\ref{P:4.1},   
    if $\II$ is a generator, then $\HH(\II)$ is saturating. Thus Eq.~\eqref{0.1} is approximately controllable by Theorem~\ref{T.2.3}. 
    
    Now assume that $\II$   is not a generator. Then   there is a vector $m\in \Z^d$   which does not belong to   $\tilde \II$.   The   set of attainability from the origin defined by
    $$
   \aA:= \{\RR_T(0,0,h+\eta): \eta\in  \Theta(u_0,h,T)\cap L^2(J_T,\HH) \}  
    $$ is contained in  $H^s(\II)$. Indeed, this follows from the assumption that $h\in L^2  (J_T,H^{s-1}(\II))$, the fact that the space $H^s(\II)$ is invariant for the linear dynamics of the heat equation, and the assumption that the nonlinear term $f$ in Eq.~\eqref{0.1} is a polynomial (so $f$   maps  $H^s(\II)$ to itself).       Thus the functions $\cos\lag x,m\rag$ and $\sin\lag x,m\rag$ will be  orthogonal to $\aA$, hence $\aA$ is not dense in $H^s$.  This proves that    Eq.~\eqref{0.1} is not approximately controllable by~$\HH(\II)$-valued~control. 
 \end{proof}

 \section{Proof of Proposition~\ref{P:2.4}}\label{S:3}

  Assume that $u_0,\eta\in H^{s+1}$, $\zeta \in  H^{s+2}$, and   $h\in L^2  (J_T,H^{s-1})$ are such that 
\begin{equation}\label{SSSD}
\|u_0\|_{s+1} +\|\eta\|_{s+1}+\|\zeta\|_{s+2}+\|h\|_{L^2  (J_T,H^{s-1})}<R.
\end{equation}  Let us take any $\delta>0$ and
   consider the equation
\begin{equation}
	\p_t u-\nu\Delta (u+\de^{-1/p}\zeta)+f(u+\de^{-1/p}\zeta)=h+\de^{-1}\eta.
\label{3.1}
\end{equation}
 By Proposition~\ref{P:1.1}, problem~\eqref{3.1}, \eqref{0.4}   has a unique maximal solution   defined on an interval $[0,T_*)$, where $T_*:=T_*(u_0,\de^{-1/p}\zeta, h+\de^{-1}\eta)>0$. Moreover,  
  \begin{equation}\label{3.2}
 \| u(t)\|_s\to +\infty \quad \text{as $t\to T_*^-$},
\end{equation}when $T_*<\ty$.
   We will show that
   \begin{itemize}
   \item[$(1)$]   there is a number $\delta_0>0$ such that for any $\delta<\de_0$, the solution of Eq.~\eqref{3.1} is defined for $t\in[0,T_*)$ with $T_*>\delta$;  
      	   \item[$(2)$]     the following   limit   holds
  \begin{equation}\label{3.4}
 	 u(\delta)\to  u_0+\eta-\zeta^p \quad\text{in $H^s$ as $\de\to 0^+$}, 
 \end{equation}uniformly with respect to $u_0,\eta,\zeta$ and $h$ satisfying \eqref{SSSD}.
   \end{itemize}
    Inspired by some ideas from~\cite{JK-1985, AS-2006}, we make a time substitution and consider the functions
 \begin{align}
 	w(t)&:=   u_0+t(\eta -\zeta^p),\label{TTRER}\\
 	v(t)&:= u(\delta t)-w(t),\nonumber
 \end{align}which are well defined for $t<\delta^{-1}T_*$. 
Then $v$  is a solution of problem
 \begin{align}
	\p_t v-\nu \delta\Delta (v+w+\de^{-1/p}\zeta)+ \delta f(v+w+\de^{-1/p}\zeta)- \zeta^p&=\delta h,\label{3.5}\\
v(0)&=0.\label{3.6}
\end{align}
Taking the scalar product in $L^2$ of Eq.~\eqref{3.5} with $(-\Delta)^s v+v$ and using the Cauchy--Schwarz   inequality,  we obtain
\begin{align}
 \p_t \| v \|^2_s+  \nu \delta\|\nabla  v \|^2_{s}   &\le   C_1\delta\left(  \|w\|_{s+1}+  \|h\|_{s-1}\right)  \|\nabla v\|_{s} \nonumber\\&\quad   +  C_1\left(  \delta^{1-1/p} \|\zeta\|_{s+2}+\|\delta f(v+w+\de^{-1/p}\zeta)- \zeta^p\|_s\right)  \|v\|_s. \label{3.7}
\end{align}
From the   Young inequality and \eqref{SSSD} we derive 
\begin{align}\label{3.8} 
	   C_1 \delta\left(  \|w\|_{s+1}+ \|h\|_{s-1}\right)  \|\nabla v\|_{s}&   \le  \frac{ \nu \delta}{2}\|\nabla v\|_{s}^2 +  C_2 \delta  \left(\|w\|_{s+1}^2+     \|h\|_{s-1}^2\right)\nonumber\\& \le\frac{ \nu \delta}{2}\|\nabla v\|_{s}^2 +  C_3\delta \left(1 +      \|h\|_{s-1}^2\right)
,  
\end{align} for   $t\le 1\wedge (\delta^{-1}T_*)$, where the constant $C_3$ (and almost all   the constants $C_i$ below) depends on $R$.
  The nonlinear term is estimated as in Section~\ref{S:1}.  We assume that $\delta\le 1$ and estimate the polynomial part as follows:
\begin{equation}
 \|\delta P_p(v+w+\de^{-1/p}\zeta)- \zeta^p\|_s\le  C_4\delta^{1/p} \left(\|v\|_s^p   +1\right) \label{3.9}
\end{equation} for $ t\le 1\wedge (\delta^{-1}T_*)$, where we used \eqref{1.10}, \eqref{SSSD}, and \eqref{TTRER}. For the term $g$, we use the inequality (cf.  \eqref{EEgna})
$$
\|g(a)\|_s\le C_5 \, \|a\|_s^s, \quad a\in H^s,
$$    and  the assumptions that $p>s$,   $\delta\le 1$, and \eqref{SSSD}   to prove that:  
\begin{align}
   \|\delta g(v+w+\de^{-1/p}\zeta)\|_s\le  C_6\delta^{1/p} \left(\|v\|_s^s   +1\right) \label{3.9SSS}
\end{align} for $ t\le 1\wedge (\delta^{-1}T_*)$,
Combining  inequalities \eqref{3.7}-\eqref{3.9SSS} and the assumption that~$p\ge2$,   we arrive at
\begin{align}
 \p_t \| v \|^2_s    \le     C_7 \delta^{1/p}\left(       \|h\|_{s-1}^2 + \|v\|_s^{p+1}+1   \right)  =C_7\delta^{1/p}\left(   \psi+ \|v\|_s^{p+1}    \right), \label{3.10}
\end{align}where  $ \psi:=   \|h\|_{s-1}^2 +   1$ and the constant     $C_7>0$ does not depend on     $\delta$. Let~us~set  
\begin{equation}
\Phi(t):= A+C_7 \delta^{1/p}\int_0^t  \|v\|_s^{p+1} \dd \tau,     \label{3.11}
\end{equation}
where 
$$
A:= C_7\delta^{1/p}\int_0^1 \psi \dd \tau.  	
$$   Inequality  \eqref{3.10} and initial condition  \eqref{3.6} imply  that 
$$
(\dot \Phi)^{2/(p+1)}\le (C_7 \delta^{1/p})^{2/(p+1)}\Phi. 
$$ 
The latter is  equivalent to 
$$
\frac{\dot \Phi}{\Phi^{(p+1)/2}}\le C_7\delta^{1/p}.
$$Integrating this inequality, we obtain
$$
\Phi(t)\le  A \left(1-\frac{(p-1)}{2}C_7 \delta^{1/p} A^{(p-1)/2} t\right)^{-2/(p-1)}, \quad t<1\wedge (\delta^{-1}T_*)\wedge T_1,
$$where 
\begin{align*}
T_1:=&\,    \left((p-1)C_7 \delta^{1/p} A^{(p-1)/2}\right)^{-1}\\=&  \left((p-1)C_7^{(p+1)/2} \delta^{(p+1)/(2p)} \left(\int_0^1\psi\dd \tau\right)^{(p-1)/2}\right)^{-1}.
\end{align*}  
  We   choose $\delta_0\in (0,1)$ so small that
  $T_1\ge1$ for any~$\delta<\delta_0$~and  
\begin{equation}\label{3.12}
\Phi(t)\le  2A= 2C_7\delta^{1/p}\int_0^1 \psi \dd \tau,  \quad t< 1 \wedge (\delta^{-1}T_*).
\end{equation} 
 From this and \eqref{3.2} we derive that 
$$
\delta^{-1}T_*>1 \quad\text{ for $\delta<\delta_0$},
$$which yields (1). Combining \eqref{3.10}-\eqref{3.12}, we see that 
$$
\|v(1)\|_s\le C_8 \delta^{1/p},
$$
so
$v(1)\to 0$ in $H^s$ as $\delta\to 0^+.$ This gives \eqref{3.4} and completes the proof of the proposition.

\section{Saturating subspaces}\label{S:4}
 
 Let $\HH(\II)$ be the space defined by \eqref{0.5} and $\II\subset \Z^d$ be   a   finite symmetric set      containing the origin. Here we prove Proposition~\ref{P:4.1}.
 \begin{proof}[Proof of Proposition~\ref{P:4.1}]
   {\it Step~1.~Sufficiency of the condition.}~Assume that $\II$ is a generator and~$\{\HH_k(\II)\}$ and $\HH_\ty(\II)$ are the vector  spaces defined by~\eqref{2.1} with~$\HH=\HH(\II)$. 
    We distinguish two cases.
   
   \medskip
 {\it Case~1.~$p$ is odd.}~This  case   is particularly simple   due to the following representation of the space $\FF(\HH)$.
\begin{lemma}\label{L:4.2}
 If $p$ is odd,   then
\begin{align}
\FF(\HH)&=\lspan\left\{\HH,\{\zeta^p:\,\,\zeta\in\HH \}\right\}\label{4.1}	\\
&=\lspan\left\{\HH,\{\zeta_1\cdot\ldots\cdot\zeta_p:\,\,\zeta_i\in\HH,\,\, i=1,\ldots,p \}\right\}.\label{4.2}
\end{align} 
	\end{lemma}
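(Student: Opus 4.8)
The plan is to compute $\FF(\HH)$ explicitly by showing that, when $p$ is odd, the cone $\CC(\HH)$ is in fact already a finite-dimensional \emph{vector space}, namely $V:=\lspan\{\HH,\{\zeta^p:\zeta\in\HH\}\}$; once this is known, both displayed equalities follow with little effort. Two preliminary remarks will be used repeatedly. First, since $\HH$ is a finite-dimensional subspace of $H^{s+2}$ and $H^{s+2}$ is a Banach algebra (as $s+2>d/2$), the space $V$ is a finite-dimensional subspace of $H^{s+2}$: fixing a basis $e_1,\dots,e_N$ of $\HH$, every $p$-th power $\zeta^p$ lies in the span of the finitely many monomials $e_1^{\alpha_1}\cdots e_N^{\alpha_N}$ with $\alpha_1+\dots+\alpha_N=p$. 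Second, since $p$ is odd, for every $\zeta\in\HH$ and every $c\in\R$ one has $c\,\zeta^p=\xi^p$ with $\xi:=c^{1/p}\zeta$ when $c\ge0$ and $\xi:=-|c|^{1/p}\zeta$ when $c<0$; in particular $-\zeta^p=(-\zeta)^p$, and in all cases $\xi\in\HH$.

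Next I would prove the identity $\CC(\HH)=V$. The inclusion $\CC(\HH)\subseteq V$ is immediate: any element $\eta-\sum_{m=1}^{n}\zeta_m^p$ of $\CC(\HH)$ has $\eta\in\HH\subseteq V$ and each $\zeta_m^p\in V$, so it lies in the vector space $V$. For the reverse inclusion, take $v\in V$ and write $v=\eta+\sum_{j=1}^{k}c_j\zeta_j^p$ with $\eta\in\HH$, $c_j\in\R$, $\zeta_j\in\HH$; by the second remark above, $c_j\zeta_j^p=\xi_j^p$ for suitable $\xi_j\in\HH$, and then, using oddness once more to flip the overall sign, $v=\eta-\sum_{j=1}^{k}(-\xi_j)^p\in\CC(\HH)$ (the degenerate case $k=0$ being covered by $\eta=\eta-0^p$). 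Hence $\CC(\HH)=V$.

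It then remains to conclude. Since $V$ is finite-dimensional it is closed in $H^s$, so $\overline{\CC(\HH)}^{H^s}=V$; as moreover $V\subseteq H^{s+2}$ is itself a vector space, $V$ is the largest vector space contained in $H^{s+2}\cap\overline{\CC(\HH)}^{H^s}=V$, that is, $\FF(\HH)=V$, which is \eqref{4.1}. For \eqref{4.2}, the inclusion $\lspan\{\zeta^p:\zeta\in\HH\}\subseteq\lspan\{\zeta_1\cdots\zeta_p:\zeta_i\in\HH\}$ is trivial (take all factors equal), while the reverse inclusion follows from the polarization identity, which expresses $\zeta_1\cdots\zeta_p$ as a linear combination of the $p$-th powers $\bigl(\sum_{i\in S}\zeta_i\bigr)^p$ over subsets $S\subseteq\{1,\dots,p\}$, each $\sum_{i\in S}\zeta_i$ belonging to $\HH$ since $\HH$ is a subspace. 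Adjoining $\HH$ to both sides gives the equality of \eqref{4.1} and \eqref{4.2}.

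I do not expect a genuine obstacle here: the whole content of the odd case is that oddness of $p$ lets one absorb both arbitrary real scalars and the overall minus sign in the definition of $\CC(\HH)$ into $p$-th powers of elements of $\HH$, which collapses the cone to a subspace and makes its $H^s$-closure trivial. The only point requiring a little care is not to violate the constraint $n\ge1$ in the definition of $\CC(\HH)$, which is why the degenerate term must be written as $\eta-0^p$ rather than simply $\eta$. (The even case, where this collapse fails, is genuinely harder and is treated separately.)
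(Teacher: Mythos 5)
Your proof is correct, and for equality \eqref{4.1} it fills in exactly the step the paper leaves implicit ("follows immediately from the fact that $p$ is odd"): oddness absorbs scalars and the minus sign into $p$-th powers, so the cone $\CC(\HH)$ is already the finite-dimensional space $V$, which is therefore closed and equal to its own largest contained subspace. Where you genuinely diverge is \eqref{4.2}. The paper obtains $\zeta_1\cdots\zeta_p\in\GG_1$ by differentiating $F(x)=(x_1\zeta_1+\ldots+x_p\zeta_p)^p$, computing the mixed partial $\frac{\p^p}{\p_{x_1}\ldots\p_{x_p}}F(0,\ldots,0)=p!\,\zeta_1\cdots\zeta_p$ and invoking closedness of $\GG_1$ to pass the limit of difference quotients inside. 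Your polarization identity is the exact finite-difference counterpart of that mixed partial: it expresses $\zeta_1\cdots\zeta_p$ as an explicit finite linear combination of $p$-th powers of elements of $\HH$, so no limiting argument and no appeal to closedness is needed for this step. The two mechanisms are morally identical, but yours is slightly more elementary and self-contained; the paper's differentiation trick is perhaps more readily adapted to situations where no clean algebraic identity is available. Either way the conclusion and the surrounding logic match, and your care with the constraint $n\ge1$ (writing $\eta=\eta-0^p$) is a correct, if minor, point.
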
 This lemma is proved at the end of this section.  Here we apply it to show~that   
   \begin{equation}\label{4.3}
   	\cos\lag x,l\pm m\rag,\,\, \sin\lag x,l\pm m\rag \in \HH_1(\II) \quad \text{for $l,m\in \II$}.
   \end{equation} Indeed, this easily follows from \eqref{4.2} by taking $\zeta_1= \ldots= \zeta_{p-2}=1$ and choosing appropriately $\zeta_{p-1}$ and $\zeta_p$     from the identities
   \begin{align*}
   	\cos\lag x,l\pm m\rag&=\cos\lag x,l\rag\cos\lag x, m\rag\mp \sin\lag x,l\rag\sin\lag x, m\rag,\\
 	\sin\lag x,l\pm m\rag&=\sin\lag x,l\rag\cos\lag x, m\rag\pm \cos\lag x,l\rag\sin\lag x, m\rag.  
   \end{align*}Combining \eqref{4.3} with the fact that $\II$ is a generator, we see that 
   $$
   \HH_\ty(\II)=\lspan\{\cos\lag x,  m\rag, \, \sin\lag x,  m\rag:\,\, m\in \Z^d\}.
   $$Thus $ \HH_\ty(\II)$   is dense in $H^s$ and $\HH(\II)$ is saturating.
 	
   \medskip
    {\it Case~2.~$p$ is even.} 
 Here we show that
    \begin{equation}\label{4.4}
   	\cos\lag x,l\pm m\rag,\, \sin\lag x,l\pm m\rag \in \HH_2(\II) \quad \text{for $l,m\in \II$}.
   \end{equation} 
    We first  assume that $p\ge4$.  Let us   check that 
 \begin{equation}\label{4.5}
   	\cos\lag x,2 m\rag    \in \HH_1(\II) \quad \text{for $m\in \II$}.
   \end{equation} 
 Indeed, for any $\e>0$   and $\alpha=-2/(p-2)$,    we have the equalities
  \begin{align*}
   	(\e^\alpha\!+\!\e\cos\lag x,m\rag)^p&= \frac{p(p-1)}{4}\left(1\!+\!\cos\lag x, 2 m\rag\right) \!+\!\e^{\alpha p}\!+\!p\e^{\alpha (p-1) +1}\cos\lag x,m\rag\!+\!a(\e),\\
 		(\e^\alpha\!+\!\e\sin\lag x,m\rag)^p&= \frac{p(p-1)}{4}\left(1\!-\!\cos\lag x, 2 m\rag\right) \!+\!\e^{\alpha p}\!+\!p\e^{\alpha (p-1) +1}\sin\lag x,m\rag\!+\!b(\e),  
   \end{align*}where   $a(\e),\, b(\e)\to 0$ in~$H^s$ as $\e\to 0.$ As  
   $$
       1,\,  \cos\lag x, m\rag,\, \sin\lag x,m\rag  \in\HH(\II),
   $$
  we obtain \eqref{4.5}. 
  Now we use a similar  argument to prove~\eqref{4.4}. The fact that   
\begin{equation}\label{RRTE}
   	\cos\lag x,l+ m\rag \in  \HH_2(\II) \quad \text{for $l,m\in \II$}
\end{equation}is   checked by using the equalities 
    \begin{align*}
   	(\e^\alpha+\e(\cos\lag x,l\rag\pm\cos\lag x,m\rag))^p&=\pm p(p-1) \cos\lag x,  l\rag\cos\lag x,  m\rag+\eta^c_{\pm}(\e)  +a_\pm(\e),\\
 		(\e^\alpha+\e(\sin\lag x,l\rag\pm\sin\lag x,m\rag))^p&=\pm p(p-1) \sin\lag x,  l\rag\sin\lag x,  m\rag+\eta^s_{\pm}(\e)  +b_\pm(\e),  
   \end{align*}where $\eta^c_\pm(\e), \eta^s_\pm(\e)\in \HH_1(\II)$ (here we use \eqref{4.5}) and $a_\pm(\e), b_\pm(\e)\to 0$ in~$H^s$ as~$\e\to 0$. 
   The remaining assertions in  \eqref{4.4} are proved in a similar way.   From~\eqref{4.4} and the fact that $\II$ is a generator we derive    
 $$
   \HH_\ty(\II)\supset\lspan\{\cos\lag x,  l\rag,\, \sin\lag x,  l\rag:\,\, l\in \Z^d\},
   $$which proves that    $\HH(\II)$ is saturating  when $p\ge 4$.
   
     The case $p=2$ is easier. To show \eqref{4.5}, we use the equalities 
$$
   	\cos^2\lag x,m\rag  = \frac12(1+\cos\lag x,2m\rag) , 
 		 	\quad \sin^2\lag x,m\rag  = \frac12(1-\cos\lag x,2m\rag)   
$$and the assumption that $1\in \HH(\II)$. 
It follows that 
$$
\cos^2\lag x,m\rag,\, \sin^2\lag x,m\rag \in \HH_1(\II) \quad \text{ for $m\in \II$.}
$$ 
Then~\eqref{RRTE} follows from the equalities
  \begin{align*}
   	(\cos\lag x,l\rag\pm\cos\lag x,m\rag)^2&=\cos^2\lag x,l\rag +\cos^2\lag x,m\rag\pm2\cos\lag x,l\rag \cos\lag x,m\rag,\\
 		(\sin\lag x,l\rag\pm\sin\lag x,m\rag)^2&=\sin^2\lag x,l\rag +\sin^2\lag x,m\rag\pm2\sin\lag x,l\rag \sin\lag x,m\rag.  
   \end{align*} The proof of the other  assertions in  \eqref{4.4} is similar. As in the case $p\ge4$, we conclude that  $\HH(\II)$ is saturating.

\medskip
 {\it Step~2.~Necessity of the condition.} Now assume that $\II$ is not a generator.  Then there is a vector~$m\in \Z^d$   which does not belong to the set $\tilde \II$ of all      linear~combinations of elements of~$\II$ with integer coefficients. It is easy to see that 
   $$
   \HH_\ty(\II)\subset\lspan\{\cos\lag x,  l\rag,\, \sin\lag x,  l\rag:\,\, l\in \tilde\II\}.
   $$Thus the functions $\cos\lag x,m\rag$ and $\sin\lag x,m\rag$ are orthogonal to $\HH_\ty(\II)$. This  implies that $\HH_\ty(\II)$ is not dense in~$H^s$, so $\HH (\II)$ is not saturating.     
\end{proof}

The simplest example of saturating space of form \eqref{0.5} will be the $(2d+1)$-dimensional space   corresponding to the set 
$$
\II=\{0, \pm e_i:\,\, i=1, \ldots, d\} \subset \Z^d, 
$$  where  $\{e_j\}_{j=1}^d$  is the standard basis in $\R^d$.   The following result, combined with Proposition~\ref{P:4.1}, gives a simple way  for constructing more saturating spaces.   
\begin{theorem}   A set  $\II\subset \Z^d$ is a generator if and
only if the greatest common divisor of the set 
$$
\{\det(a_1,\ldots,a_d): a_i\in \II,\,i=1, \ldots,d \}
$$ is $1$, where $\det(a_1,\ldots,a_d)$ is the determinant of the $d\times d$
matrix with columns $a_1,\ldots,a_d$.  
\end{theorem}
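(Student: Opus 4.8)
The plan is to restate the problem group-theoretically and reduce it to a computation of a lattice index. Recall that $\II$ is a generator exactly when the subgroup $\tilde\II\subset\Z^d$ generated by $\II$ is all of $\Z^d$. Write $S:=\{\det(a_1,\ldots,a_d):a_i\in\II\}$ for the set of determinants and $g$ for its greatest common divisor. I would first dispose of the degenerate case: if $\tilde\II$ has rank $<d$, then $\II$ lies in a proper rational subspace, every $d$-tuple drawn from $\II$ is linearly dependent, $S=\{0\}$, $g=0\neq1$, and $\II$ is not a generator, so the equivalence holds. Hence assume henceforth that some element of $S$ is nonzero, i.e.\ $\tilde\II$ has full rank $d$ and finite index in $\Z^d$.

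Next I would show that a generator has $g=1$, by contraposition. Suppose a prime $q$ divides every element of $S$. Reducing modulo $q$, any $d$ of the images in $(\Z/q\Z)^d$ of the vectors in $\II$ form a matrix of vanishing determinant over the field $\Z/q\Z$, hence are linearly dependent there; so these images span a subspace $W$ of dimension at most $d-1$, in particular $W\subsetneq(\Z/q\Z)^d$. If $\pi:\Z^d\to(\Z/q\Z)^d$ denotes reduction mod $q$, this means $\tilde\II\subset\pi^{-1}(W)\subsetneq\Z^d$, so $\tilde\II\neq\Z^d$ and $\II$ is not a generator. Contrapositively, if $\II$ is a generator then no prime divides all of $S$, i.e.\ $g=1$.

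For the converse I would invoke the structure theorem for subgroups of $\Z^d$: the full-rank subgroup $\tilde\II$ is free abelian on some basis $b_1,\ldots,b_d$; let $B$ be the $d\times d$ integer matrix with these columns. Every $a\in\II$ lies in $\tilde\II$, so $a=Bc_a$ for some $c_a\in\Z^d$; therefore, for each $d$-tuple drawn from $\II$,
\[
\det(a_{i_1},\ldots,a_{i_d})=\det(B)\,\det(c_{a_{i_1}},\ldots,c_{a_{i_d}}),
\]
so $\det B$ divides every element of $S$, hence divides $g=1$, whence $\det B=\pm1$. By Cramer's rule $B^{-1}$ then has integer entries, so $\{b_1,\ldots,b_d\}$ is a $\Z$-basis of $\Z^d$; therefore $\tilde\II=\Z^d$ and $\II$ is a generator. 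This completes the equivalence.

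The argument is elementary modulo the structure theorem; the one place demanding a little care is the passage from $\det B=\pm1$ to the statement that the columns of $B$ generate $\Z^d$, which is precisely the integrality of $B^{-1}$ supplied by Cramer's rule. Conceptually, everything here is packaged in the Smith normal form $M=UDV$ (with $U,V$ unimodular) of the $d\times|\II|$ matrix $M$ whose columns are the elements of $\II$: the product of the elementary divisors $d_1\mid\cdots\mid d_d$ equals $[\Z^d:\tilde\II]$, and it also equals the gcd of the $d\times d$ minors of $M$ --- that is, $g$ --- because Cauchy--Binet shows this gcd is unchanged when $M$ is multiplied on either side by a unimodular matrix. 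One could organise the proof around that observation instead, but the self-contained version above is shorter.
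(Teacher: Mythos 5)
Your proof is correct. Note that the paper does not actually prove this theorem: it only cites Section~3.7 of Jacobson's \emph{Basic Algebra I}, where the statement is obtained from the theory of invariant factors (Smith normal form), exactly as in the closing remark of your proposal. Your self-contained version is sound at every step: the rank-deficient case is correctly disposed of (with the usual convention that the gcd of $\{0\}$ is $0$); the forward direction via reduction modulo a prime $q$ is valid, since if every $d$-tuple of images in $(\Z/q\Z)^d$ were dependent the images could not contain a basis, so their span $W$ is proper and $\tilde\II\subset\pi^{-1}(W)\subsetneq\Z^d$; and the converse correctly uses that $\det B$ is a common divisor of all the $d\times d$ determinants, hence divides their gcd $1$, so $B$ is unimodular and its columns generate $\Z^d$. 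Compared with the cited route, your argument trades the machinery of elementary divisors for two elementary observations (linear algebra over $\Z/q\Z$, and Cramer's rule), which makes it shorter and self-contained; the Smith-normal-form approach buys the stronger quantitative statement that the gcd of the $d\times d$ minors equals the index $[\Z^d:\tilde\II]$ in the full-rank case, of which the theorem is the special case index~$=1$.
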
 
  See        Section~3.7 in~\cite{Jac-85} for the proof of this theorem.

\begin{proof}[Proof of Lemma~\ref{L:4.2}]
Equality \eqref{4.1} follows immediately from the fact that~$p$ is odd.
Let us denote by~$\GG_1$ and $\GG_2$ the spaces on the right-hand sides of \eqref{4.1} and \eqref{4.2}, respectively. Obviously,~$\GG_1\subset \GG_2$. To see the inclusion  $\GG_2\subset \GG_1$, let us take any~$\zeta_1, \ldots, \zeta_p\in \HH$, and    consider the function
$$
F:\R^p\to \GG_1,\quad (x_1,\ldots,x_p)\mapsto (x_1\zeta_1+\ldots+x_p\zeta_p)^p.
$$
As $\GG_1$ is closed, it contains  the derivative 
$$
\frac{\p^p}{\p_{x_1}\ldots \p_{x_p}} F(0,\ldots,0)=p!\,\zeta_1\cdot\ldots\cdot \zeta_p.
$$  This implies that $\zeta_1\cdot\ldots\cdot \zeta_p\in \GG_1$, so 
    $\GG_2\subset \GG_1$.
 \end{proof}

\addcontentsline{toc}{section}{Bibliography}
\def\cprime{$'$} \def\cprime{$'$}
  \def\polhk#1{\setbox0=\hbox{#1}{\ooalign{\hidewidth
  \lower1.5ex\hbox{`}\hidewidth\crcr\unhbox0}}}
  \def\polhk#1{\setbox0=\hbox{#1}{\ooalign{\hidewidth
  \lower1.5ex\hbox{`}\hidewidth\crcr\unhbox0}}}
  \def\polhk#1{\setbox0=\hbox{#1}{\ooalign{\hidewidth
  \lower1.5ex\hbox{`}\hidewidth\crcr\unhbox0}}} \def\cprime{$'$}
  \def\polhk#1{\setbox0=\hbox{#1}{\ooalign{\hidewidth
  \lower1.5ex\hbox{`}\hidewidth\crcr\unhbox0}}} \def\cprime{$'$}
  \def\cprime{$'$} \def\cprime{$'$} \def\cprime{$'$}

\end{document}